\newcommand{\bbC}{\mathbb{C}}
\newcommand{\bbN}{\mathbb{N}}
\newcommand{\bbR}{\mathbb{R}}
\newcommand{\calA}{\mathcal{A}}
\newcommand{\calB}{\mathcal{B}}
\newcommand{\calL}{\mathcal{L}}
\DeclareMathOperator{\id}{id}
\DeclareMathOperator{\one}{\mathbbm{1}}
\DeclareMathOperator{\re}{Re}
\newcommand{\argument}{\mathord{\,\cdot\,}}
\newcommand{\dx}{\,\mathrm{d}}
\DeclareMathOperator{\sgn}{sgn}
\DeclarePairedDelimiter{\norm}{\lVert}{\rVert}
\DeclarePairedDelimiter{\modulus}{\lvert}{\rvert}
\newcommand{\ue}{\mathrm{e}}
\newcommand{\ui}{\mathrm{i}}
\DeclareMathOperator{\divergence}{div}
\newcommand{\spec}{\sigma}
\newcommand{\pnt}{{\operatorname{pnt}}}
\theoremstyle{definition}
\newtheorem{definition}{Definition}[section]
\newtheorem{remark}[definition]{Remark}
\newtheorem{example}[definition]{Example}
\theoremstyle{plain}
\newtheorem{proposition}[definition]{Proposition}
\newtheorem{lemma}[definition]{Lemma}
\newtheorem{theorem}[definition]{Theorem}
\newtheorem{corollary}[definition]{Corollary}
\numberwithin{equation}{section}
\begin{document}

\normalem

\title[Long-time behaviour of parabolic systems]{Convergence to equilibrium for linear parabolic systems coupled by matrix-valued potentials}
\author{Alexander Dobrick}
\address[A. Dobrick]{Christian-Abrechts-Universit\"at zu Kiel, Arbeitsbereich Analysis, Heinrich-Hecht-Platz 6, 24118 Kiel, Germany}
\email{dobrick@math.uni-kiel.de}
\author{Jochen Gl\"uck}
\address[J. Gl\"uck]{Bergische Universit\"at Wuppertal, Fakult\"at f\"ur Mathematik und Naturwissenschaften, Gaußstr.\ 20, 42119 Wuppertal, Germany}
\email{glueck@uni-wuppertal.de}
\subjclass[2010]{35K40; 35B40; 47D06}
\keywords{Coupled heat equations; Schrödinger semigroups; matrix potential; long-term behaviour}
\date{\today}
\begin{abstract}
	We consider systems of parabolic linear equations, subject to Neumann boundary conditions on bounded domains in $\bbR^d$, that are coupled by a matrix-valued potential $V$, and investigate under which conditions each solution to such a system converges to an equilibrium as $t \to \infty$.
	
	While this is clearly a fundamental question about systems of parabolic equations, it has been studied, up to now, only under certain positivity assumptions on the potential $V$. Without positivity, Perron--Frobenius theory cannot be applied and the problem is seemingly wide open. In the present article, we address this problem for all potentials that are $\ell^p$-dissipative for some $p \in [1,\infty]$. While the case $p=2$ can be treated by classical Hilbert space methods, the matter becomes more delicate for $p \not= 2$. We solve this problem by employing recent spectral theoretic results that are closely tied to the geometric structure of $L^p$-spaces.
\end{abstract}

\maketitle

\section{Introduction}

\subsection*{Coupled parabolic equations}

On a bounded domain $\Omega \subseteq \bbR^d$ with sufficiently smooth boundary, consider elliptic differential operators $\calA_1, \dots, \calA_N$ in divergence form with Neumann boundary conditions and a bounded measurable function $V \colon \Omega \to \bbC^{N \times N}$. In this paper, we are interested in the long-time behaviour of the solutions to the coupled parabolic equation
\begin{align}
	\label{eq:the-equation}
	\frac{d}{dt}
	\begin{pmatrix}
		u_1 \\ \vdots \\ u_N
	\end{pmatrix}
	= 
	\begin{pmatrix}
		\calA_1 u_1 \\
		\vdots \\
		\calA_N u_N
	\end{pmatrix}
	+ V
	\begin{pmatrix}
		u_1 \\ \vdots \\ u_N
	\end{pmatrix}.
\end{align}
Such equations arise, for instance, as linearizations of reaction-diffusion equations; this renders uniform convergence of the solutions to $0$ a property of~\eqref{eq:the-equation} that is particularly desirable, since this property is related (by the principle of linearized stability, see for instance \cite[Theorem~10.2.2]{Cazenave1998}) to asymptotic stability of equilibria of reaction-diffusion equations.
Asymptotic stability of equilibria has been studied for many specific classes of reaction-diffusion equations and by various methods; see, for instance, \cite[Section~3]{Haraux1988}, \cite[Theorem~3]{Pierre2017}, \cite[Theorem~1.1]{Pierre2018} (and e.g.\ \cite[Theorem~1.4]{Amann1978} for an equation with Robin rather than Neumann boundary conditions).

On the other hand, there appears to be a remarkable gap in the literature concerning the convergence of the solutions to the linear equation~\eqref{eq:the-equation}, in its general form, to possibly non-zero equilibria.
The authors are aware of only three convergence results for equations similar to~\eqref{eq:the-equation}:

\begin{enumerate}[(i)]
	\item 
	In \cite[Sections~7 and~8]{Nagel1989} the long-term behaviour of such systems (though in non-divergence form) was considered 
	with Dirichlet boundary conditions and under assumptions on the potential 
	that ensure positivity of the solution semigroup.  
	The arguments to obtain convergence to equilibrium rely on Perron--Frobenius theory. 
	
	\item 
	In \cite[Theorem~4.4]{Addona2019} parabolic systems on the whole space $\bbR^d$ are studied, 
	also under certain positivity assumption on the potential and by employing arguments from Perron--Frobenius theory.
	
	\item 
	In \cite[Section~5]{Dobrick2021} the present authors used recent spectral theoretic results from \cite{Glueck2016b} to discuss the long-term behaviour of solutions to parabolic systems on the whole space $\bbR^d$ for $\ell^\infty$-dissipative potentials. 
\end{enumerate}

\subsection*{Difficulties in the analysis of the long-term behaviour}

Heuristically, the long-term behaviour of~\eqref{eq:the-equation} is more involved than in the scalar-valued case since, even in cases where the potential $V$ has only real entries and is assumed to preserve boundedness of all solutions, the coupling by $V$ can cause the existence of periodic solutions.
This is for instance the case in the simple example where $N = 2$, $\calA_1 \coloneqq \calA_2 \coloneqq \Delta$ and 
\begin{align*}
	V(x) 
	\coloneqq
	\begin{pmatrix}
		0 & -1 \\ 
		1 &  0
	\end{pmatrix} 
	\qquad 
	\text{for all } x \in \Omega.
\end{align*} 
In this example, dissipativity of $V$ with respect to the Euclidean norm on $\bbR^2$ implies that all solutions to~\eqref{eq:the-equation} are bounded (see Section~\ref{section:setting-the-stage} for details). However,
\begin{align*}
u(t) = 
	\begin{pmatrix}
		u_1(t) \\ 
		u_2(t)
	\end{pmatrix}
	\coloneqq
	\begin{pmatrix}
		\cos(t) \one \\
		\sin(t) \one
	\end{pmatrix}
	\qquad 
	\text{for all } t \ge 0.
\end{align*}
is a periodic solution to~\eqref{eq:the-equation} (where $\one$ denotes the constant function on $\Omega$ with value $1$).

This kind of behaviour is not difficult to understand for potentials $V$ which do not depend on the spatial variable and for elliptic operators $\calA_1, \ldots, \calA_N$ which all coincide (see Subsection~\ref{subsection:constant-potential} for details). However, the matter becomes more delicate if one of these two properties is not satisfied.

\subsection*{Contributions in this article}

The purpose of this paper is to give a variety of conditions for the convergence to equilibrium of the solutions to~\eqref{eq:the-equation}. 
After some preliminaries in Section~\ref{section:setting-the-stage}, several convergence criteria are established in Section~\ref{section:convergence}. The third of these criteria is probably the most surprising one, and it is particularly interesting for the cases $p=1$ and $p=\infty$ in which the condition is easily checkable.

\begin{enumerate}[\hspace{6pt} (a)]
	\item If all solutions to~\eqref{eq:the-equation} are bounded (say, in $L^p(\Omega;\bbC^N)$ for some $p \in [1,\infty]$), each matrix $V(x)$ is real, and the off-diagonal entries of each matrix $V(x)$ are positive, then every solution to~\eqref{eq:the-equation} converges as $t \to \infty$ (Subsection~\ref{subsection:quasi-positive-potentials}).
	
	\item If each matrix $V(x)$ is $\ell^2$-dissipative on $\bbC^N$, convergence of the solutions can be characterized purely in terms of the spectral structure of the matrices $V(x)$ (Subsection~\ref{subsection:convergence-for-2-dissipative-potentials}).
	
	\item If each matrix $V(x)$ is real and $\ell^p$-dissipative on $\bbR^N$ for a fixed number $p \in [1,\infty] \setminus \{2\}$, all solutions converge as $t \to \infty$ (Subsection~\ref{subsection:convergence-for-p-dissipative-potentials}).
	
	\item If all elliptic operators $\calA_k$ coincide and the matrices $V(x)$ do not depend on $x$, the vector-valued differential operator commutes with the coupling potential. The long-term behaviour of the solutions to~\eqref{eq:the-equation} can thus be characterized in terms of the long-term behaviour of the semigroup on $\bbC^N$ generated by $V$. 
	
	\item
	If all elliptic operators $\calA_k$ coincide and the matrices $V(x)$ are simultaneously diagonalizable, then the equation can be decomposed into scalar-valued subsystems, for which sufficient conditions for convergence are easier to find.
\end{enumerate}

The main results are criteria~(a)--(c) which will be presented in the Subsections~\ref{subsection:quasi-positive-potentials}--\ref{subsection:convergence-for-p-dissipative-potentials}, respectively.
The remaining criteria~(d)~and~(e) presented in the Subsections~\ref{subsection:constant-potential} and~\ref{subsection:decoupling} are fairly specific. 
They are mainly included to cover particularly simple cases where the long-term behaviour can be analysed by easier and more direct methods than in the cases~(a)--(c).

Let us give a brief overview of the different tools which we will use to prove each of the five criteria~(a)--(e):

\begin{enumerate}[(a)]
	\item 
	This criterion will be derived from Perron--Frobenius theory for positive $C_0$-semigroups.
	
	\item 
	This follows from elementary dissipativity estimates.

	\item 
	This criterion will be derived from results in \cite{Dobrick2021, Glueck2016b} which relate the spectrum of contractive semigroups to the geometry of $L^p$-spaces.
	
	\item 
	This is mainly an exercise in linear algebra.
	
	\item 
	This criterion follows from combining a diagonalization argument with a special case of~(b).
\end{enumerate}

We stress that no new theoretical methods are developed in this article. The main point of the present article is rather to combine various abstract theorems from the literature to understand the long-term behaviour of~\eqref{eq:the-equation} in a large variety of cases.

\subsection*{Further related literature}

As explained before, equations of the type~\eqref{eq:the-equation} are closely related to reaction-diffusion equations. 
In addition, there is a branch of the literature where heat equations coupled by matrix-valued potentials are studied with the following focus:
for instance in \cite{Kunze2019, Kunze2020, Maichine2019} 
equations on the whole space $\bbR^d$ are considered where the potential $V$ is not assumed to be bounded. 
In this case, a major point of interest is well-posedness of the equation, i.e., whether the differential operator generates a $C_0$-semigroup.
In contrast, our setting is simpler in the sense that generation properties follow from elementary perturbation theory;
instead, our main interest is to give non-trivial conditions on the potential that imply convergence to equilibrium.

\subsection*{Organisation of the article} 

In Section~\ref{section:setting-the-stage} we discuss the detailed setting in which the parabolic problem~\eqref{eq:the-equation} is posed. Further, it is explained how the solution semigroup acts on the $L^p$-scale. The results from this section are needed as a proper set-up for our convergence results in Section~\ref{section:convergence}. 
In the appendix, we recall characterizations of $p$-dissipativity for finite-dimensional matrices.

\section{Setting the stage} \label{section:setting-the-stage}

\subsection{The equation} \label{subsection:the-equation}

Let $\emptyset \not= \Omega \subseteq \bbR^d$ be a bounded domain which has the \emph{extension property} in the sense that every Sobolev function in $H^1(\Omega;\bbC)$ is the restriction of a Sobolev function in $H^1(\bbR^d;\bbC)$. This is the case, e.g., if $\Omega$ has Lipschitz boundary \cite[Section~7.3.6]{Arendt2004}.

We fix an integer $N \geq 1$ (which will denote the number of coupled equations on $\Omega$) as well as measurable and bounded functions $A_1, \ldots, A_N \colon \Omega \to \bbR^{d \times d}$ and $V \colon \Omega \to \bbC^{N \times N}$. Moreover, we assume that there exists a constant $\nu > 0$ such that for all $k \in \{1,\ldots,N\}$ and almost all $x \in \Omega$, the uniform coercivity condition
\begin{align*}
	\re(\xi^T A_k(x) \overline \xi) \ge \nu \norm{\xi}_2^2
\end{align*}
holds for all $\xi \in \bbC^d$. We will study the long-term behaviour of the solutions to the coupled parabolic equation that is formally given by
\begin{align}
	\label{eq:cp-bounded}
	\frac{d}{dt}
	\begin{pmatrix}
		u_1 \\ \vdots \\ u_N
	\end{pmatrix}
	=
	\begin{pmatrix}
		 \divergence (A_1 \nabla u_1) \\ \vdots \\ \divergence (A_N \nabla u_N)
	\end{pmatrix}
	+ 
	V
	\begin{pmatrix}
		u_1 \\ \vdots \\ u_N
	\end{pmatrix},
\end{align}
subject to Neumann boundary conditions. Due to the weak regularity assumptions on the coefficients and on the boundary of $\Omega$, we use form methods to give precise meaning to the elliptic operators $u \mapsto \divergence (A_k\nabla u)$: For each $k \in \{1,\ldots,N\}$ we define a bilinear form
\begin{align*}
	a_k: & \, H^1(\Omega;\bbC) \times H^1(\Omega;\bbC) \to \bbC, \quad a_k(u,v) = \int_\Omega \nabla u^T A_k \nabla \overline{v} \dx x.
\end{align*}
This form induces a linear operator $- \calA_k \colon L^2(\Omega;\bbC) \supseteq D(\calA_k) \to L^2(\Omega;\bbC)$, and $\calA_k$ is interpreted as a realization of the differential operator $u \mapsto \divergence (A_k\nabla u)$ with Neumann boundary conditions. Moreover, each operator $\calA_k$ generates a positive (in the sense of Banach lattices) and contractive $C_0$-semigroup $(\ue^{t \calA_k})_{t \geq 0}$ on $L^2(\Omega;\bbC)$. For a general overview of form methods in the context of heat equations, we refer the reader to \cite{Ouhabaz2005}.

\subsection{Behaviour on the $L^p$-scale} \label{subsection:behaviour-on-the-lp-scale}

In this subsection, we briefly discuss how those semigroups act on the $L^p$-scale. We will see that, due to an ultracontractivity argument, most of the relevant properties do not depend on the choice of $p$. The arguments in this subsection are fairly standard, but there are a few subtleties since we also want to consider the respective semigroups for $p = \infty$. Thus, all the relevant properties of the semigroups are stated in detail.

We start with the observation that the constant function $\one$ is a fixed point of each semigroup $(\ue^{t \calA_k})_{t \geq 0}$ and its dual. Thus, it follows from interpolation theory that these semigroups induce positive and contractive $C_0$-semigroups on the $L^p$-scale, where $p \in [1,\infty)$. 
We denote the generators corresponding to those semigroups by $\calA_{k,p}$. In particular, one has $\calA_{k,2} = \calA_k$.

The coupled parabolic equation~\eqref{eq:cp-bounded} can now precisely be stated as the abstract Cauchy problem
\begin{align}
	\label{eq:coupled-heat-equation-on-bounded-domain-general}
	\frac{d}{dt} u = \calB_p u + Vu
\end{align}
on the Banach space $L^p(\Omega; \bbC^N)$, where $p \in [1,\infty)$ and 
\begin{align*}
	\calB_p =
	\begin{pmatrix}
		\calA_{1,p} &  & \\
		 &  \ddots &  \\
		 &  & \calA_{N,p}
	\end{pmatrix}.
\end{align*}
In the following, we endow $L^p(\Omega;\bbC^N)$, $p \in [1, \infty]$, with the norm $\norm{\argument}_p$ given by 
\begin{equation}
	\label{eq:vector-valued-lp-norm}
	\begin{aligned}
		\norm{u}_p^p &= \int_\Omega \norm{u(x)}_p^p \dx x = \sum_{k=1}^N \norm{u_k}_{L^p(\Omega)}^p \qquad \text{for } p \in [1,\infty), \text{ and} \\
		\norm{u}_\infty &= \max\{\norm{u_k}_\infty: \; k \in \{1,\ldots,N\}\}
	\end{aligned}
\end{equation}
for $u = (u_1,\ldots,u_N) \in L^p(\Omega;\bbC^N)$. This has the following simple but important consequence.

\begin{remark} \label{rem:advantage-of-choice-of-norm}
	Let $p \in [1,\infty]$. The norm defined in~\eqref{eq:vector-valued-lp-norm} is of course equivalent to the norm that we would obtain by endowing $\bbC^N$ with the Euclidean norm and then endowing $L^p(\Omega;\bbC^N)$ with the vector-valued $p$-norm. 
	
	However, the main advantage of the norm $\norm{\argument}_p$ defined in~\eqref{eq:vector-valued-lp-norm} is that it renders $L^p(\Omega;\bbC^N)$ isometrically lattice isomorphic to the $L^p$-space of scalar-valued functions over $N$ disjoint copies of $\Omega$, i.e., we can treat $L^p(\Omega;\bbC^N)$ as a scalar-valued $L^p$-space. In particular, the geometry of $L^p(\Omega;\bbC^N)$ coincides with that of a scalar-valued $L^p$-space. 
\end{remark}

In what follows, we will use the symbol $V$ both to denote the function $V \colon \Omega \to \bbC^{N \times N}$ that was introduced in the previous subsection and the operator $L^p(\Omega;\bbC^{N}) \to L^p(\Omega;\bbC^N)$ given by multiplication with this function (for any $p \in [1,\infty]$).

Since $V$ is a bounded operator for each $p$, it follows from standard perturbation theory that $\calB_p+V$ generates a $C_0$-semigroup $(\ue^{t(\calB_p + V)})_{t \geq 0}$ on $L^p(\Omega; \bbC^N)$ for each $p \in [1,\infty)$. 

The semigroups $(\ue^{t(\calB_p + V)})_{t \geq 0}$ are consistent on the $L^p$-scale. This follows from a perturbation argument (e.g., by utilizing Trotter's product formula or the Dyson--Phillips series) since the semigroups generated by $\calB_p$ are consistent. 

Moreover, $L^\infty(\Omega;\bbC^N)$ is invariant under the action of the semigroups $(\ue^{t(\calB_p + V)})_{t \geq 0}$ as the following proposition shows. For a proper reading of the proposition, note that the realizations of the multiplication operator $V$ as bounded operators on $L^p(\Omega;\bbC^N)$ are consistent for $p \in [1,\infty]$. Moreover, the exponential operators $\ue^{tV}$ are, for every $t \geq 0$, also consistent on the $L^p(\Omega;\bbC^N)$-scale; in other words, for $1 \le p \le q \le \infty$, it does not make a difference whether we consider the exponential $\ue^{tV}$ on $L^p(\Omega;\bbC^N)$ first and then restrict it to $L^q(\Omega;\bbC^N)$ or whether we consider it on $L^q(\Omega;\bbC^N)$ in the first place.

\begin{proposition} \label{prop:l_infty-norm-of-semigroup-on-bounded-domain}
	There exists a number $\omega \in \mathbb{R}$ such that $\norm{\ue^{tV}}_{\infty \to \infty} \le \ue^{t\omega}$ for all $t \geq 0$. For any such $\omega$, for each $t \geq 0$ and for each $p \in [1,\infty)$, the operator $\ue^{t(\calB_p+V)}$ on $L^p(\Omega;\bbC^N)$ leaves $L^\infty(\Omega;\bbC^N)$ invariant and satisfies $\norm{\ue^{t(\calB_p+V)}}_{\infty \to \infty} \le \ue^{t\omega}$.
\end{proposition}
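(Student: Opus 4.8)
The plan is to treat the two assertions separately. The existence of $\omega$ is elementary: $e^{tV}$ acts on $L^\infty(\Omega;\bbC^N)$ as multiplication by the matrix-valued function $x\mapsto e^{tV(x)}$, so $\norm{e^{tV}}_{\infty\to\infty}=\operatorname{ess\,sup}_{x\in\Omega}\norm{e^{tV(x)}}_{\infty\to\infty}$, where $\bbC^N$ carries the maximum norm as in~\eqref{eq:vector-valued-lp-norm}. Since $V$ is bounded and measurable, $\omega\coloneqq\operatorname{ess\,sup}_{x\in\Omega}\norm{V(x)}_{\infty\to\infty}$ is finite, and the estimate $\norm{e^{tV(x)}}_{\infty\to\infty}\le e^{t\norm{V(x)}_{\infty\to\infty}}\le e^{t\omega}$ gives $\norm{e^{tV}}_{\infty\to\infty}\le e^{t\omega}$ for all $t\ge0$.

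For the second assertion, fix any such $\omega$ and any $p\in[1,\infty)$; note $L^\infty(\Omega;\bbC^N)\subseteq L^p(\Omega;\bbC^N)$ since $\Omega$ has finite measure. Replacing $V$ by $V-\omega$ multiplies $e^{t(\calB_p+V)}$ by $e^{-t\omega}$ and turns each $e^{tV}$ into an $L^\infty$-contraction, so we may assume $\omega=0$ and must show that $e^{t(\calB_p+V)}$ leaves $L^\infty(\Omega;\bbC^N)$ invariant with $\norm{e^{t(\calB_p+V)}}_{\infty\to\infty}\le1$. First I would record that $e^{t\calB_p}$ has this property: being block diagonal, it acts componentwise through the semigroups $(e^{t\calA_{k,p}})_{t\ge0}$, each of which is positive and fixes $\one$ (this being a fixed point of $e^{t\calA_k}$ that lies in $L^p(\Omega)$, by consistency); hence $\modulus{e^{t\calA_{k,p}}f}\le e^{t\calA_{k,p}}\modulus{f}\le e^{t\calA_{k,p}}(\norm{f}_\infty\one)=\norm{f}_\infty\one$ for $f\in L^\infty(\Omega)\cap L^p(\Omega)$, so that $e^{t\calA_{k,p}}$ contracts the sup-norm on $L^\infty(\Omega)\cap L^p(\Omega)$. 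With the maximum norm on $\bbC^N$, the componentwise action then yields $\norm{e^{t\calB_p}}_{\infty\to\infty}\le1$.

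Finally, since $\calB_p$ generates a $C_0$-semigroup and $V$ is a bounded operator, Trotter's product formula gives, for every $u\in L^p(\Omega;\bbC^N)$,
\begin{align*}
	e^{t(\calB_p+V)}u=\lim_{n\to\infty}\bigl(e^{(t/n)\calB_p}e^{(t/n)V}\bigr)^n u
\end{align*}
in $L^p(\Omega;\bbC^N)$. If $u\in L^\infty(\Omega;\bbC^N)$, then each operator $\bigl(e^{(t/n)\calB_p}e^{(t/n)V}\bigr)^n$ restricts on $L^\infty(\Omega;\bbC^N)$ to a composition of $L^\infty$-contractions (by the consistency of the operators $e^{sV}$ on the $L^p$-scale and the previous paragraph), so the approximating sequence is bounded in $L^\infty(\Omega;\bbC^N)$ by $\norm{u}_\infty$ while converging to $e^{t(\calB_p+V)}u$ in $L^p(\Omega;\bbC^N)$; passing to an almost everywhere convergent subsequence shows $e^{t(\calB_p+V)}u\in L^\infty(\Omega;\bbC^N)$ with $\norm{e^{t(\calB_p+V)}u}_\infty\le\norm{u}_\infty$. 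Undoing the reduction to $\omega=0$ yields the bound $e^{t\omega}$. No step presents a genuine obstacle; the only mild subtleties are the $L^\infty$-contractivity of the scalar semigroups $(e^{t\calA_{k,p}})_{t\ge0}$ (which is where positivity and the invariance of $\one$ enter) and the passage from $L^p$-convergence under a uniform $L^\infty$-bound to membership in $L^\infty$. I use Trotter's formula rather than the Dyson--Phillips series because the latter would only produce the cruder bound $e^{t\norm{V}_{\infty\to\infty}}$ in place of $e^{t\omega}$.
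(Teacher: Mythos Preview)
Your proof is correct and follows essentially the same route as the paper's: both arguments establish the existence of $\omega$ from the trivial bound $\norm{\ue^{tV}}_{\infty\to\infty}\le \ue^{t\norm{V}_{\infty\to\infty}}$, then use Trotter's product formula together with the $L^\infty$-contractivity of $(\ue^{t\calB_p})_{t\ge 0}$ to control the approximants uniformly in $L^\infty$ and pass to the $L^p$-limit. The only cosmetic differences are your preliminary reduction to $\omega=0$ and your use of an almost-everywhere convergent subsequence where the paper simply invokes the closedness of the $L^\infty$-unit ball in $L^p$; your justification of the $L^\infty$-contractivity of $\ue^{t\calB_p}$ via positivity and the fixed point $\one$ spells out what the paper takes for granted.
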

\begin{proof}
	The existence of $\omega$ follows from the fact that $\norm{\ue^{tV}}_{\infty \to \infty} \le \ue^{t\norm{V}_{\infty \to \infty}}$ for all $t \geq 0$.
	
	Now, fix such an $\omega$ as well as $t \geq 0$ and $p \in [1,\infty)$. By Trotter's product formula (cf.\ \cite[Corollary~III.V.8]{Engel2000}) we have $\ue^{t(\calB_p+V)}f = \lim_{n \to \infty} (\ue^{\frac{t}{n}\calB_p}\ue^{\frac{t}{n}V})^nf$ with respect to the $L^p$-norm for each $f \in L^p$. The semigroup generated by $\calB_p$ is $L^\infty$-contractive. Thus, if $f$ is an element of the closed unit ball $\mathrm B[0, 1]$ of $L^\infty$, then $(\ue^{\frac{t}{n}\calB_p}\ue^{\frac{t}{n}V})^nf$ is an element of $\ue^{t\omega} \mathrm B[0, 1]$ for each $n \in \bbN$ and so is the limit as $n \to \infty$ since $\mathrm B[0, 1]$ is closed in $L^p$.
\end{proof}

Since the semigroups act consistently on the $L^p$-scale, the restriction of the operator $\ue^{t(\calB_p+V)}$ to $L^\infty(\Omega;\bbC^N)$ is the same operator for all $p \in [1,\infty)$. From now on, by abuse of notation, the restriction of $\ue^{t(\calB_p+V)}$ to $L^\infty(\Omega;\bbC^N)$ is denoted by $\ue^{t(\calB_\infty + V)}$. Note that this is used purely as a notation. In particular, no operator $\calB_\infty$ is defined and no assertions about such an operator are made. Clearly, $(\ue^{t(\calB_\infty + V)})_{t \geq 0}$ is an operator semigroup, but it is certainly not strongly continuous in general. However, it follows from Proposition~\ref{prop:ultracontractive} below that this semigroup is strongly continuous and, in fact, even norm continuous on the open time interval $(0,\infty)$.

\begin{proposition} \label{prop:ultracontractive}
	Let $p \in [1,\infty]$. Then the following assertions hold:
	\begin{enumerate}[\upshape (i)]
		\item For each $t \in (0,\infty)$ the operator $\ue^{t(\calB_p + V)}$ maps $L^p(\Omega; \bbC^N)$ boundedly into $L^\infty(\Omega; \bbC^N)$, i.e., it is a bounded operator from $L^p(\Omega;\bbC^N)$ to $L^\infty(\Omega;\bbC^N)$.
		
		\item For each $t \in (0,\infty)$ the operator $\ue^{t(\calB_p + V)}$ is compact on $L^p(\Omega; \bbC^N)$.
	\end{enumerate}
\end{proposition}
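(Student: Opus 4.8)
The plan is to establish~(i) first and then to deduce~(ii) from~(i) together with the Hilbert-space case $p=2$.

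\emph{Ultracontractivity, part~(i).} For $p=\infty$ there is nothing to prove, since $\ue^{t(\calB_\infty+V)}$ is by definition the restriction of $\ue^{t(\calB_p+V)}$ to $L^\infty(\Omega;\bbC^N)$ and Proposition~\ref{prop:l_infty-norm-of-semigroup-on-bounded-domain} already provides the bound. For $p<\infty$ I would start with the unperturbed diagonal semigroup. Since $\Omega$ has the extension property, the Sobolev embedding on $\bbR^d$ restricts to $H^1(\Omega;\bbC)\hookrightarrow L^{q_0}(\Omega;\bbC)$ for some $q_0>2$ (namely $q_0=\tfrac{2d}{d-2}$ for $d\ge 3$, and any finite $q_0>2$ for $d\le 2$); combined with the fact recorded above that $\one$ is a fixed point of each $(\ue^{t\calA_k})_{t\ge 0}$ and of its dual — so that, as already observed, each of these semigroups is $L^1$- and $L^\infty$-contractive — the classical ultracontractivity machinery (see, e.g., \cite{Ouhabaz2005}) shows that each $\ue^{t\calA_k}$ maps $L^1(\Omega)$ boundedly into $L^\infty(\Omega)$ for every $t>0$, with a bound of order $t^{-d/2}$ as $t\downarrow 0$. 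Consequently the diagonal operator $\ue^{t\calB_p}$ maps $L^p(\Omega;\bbC^N)$ boundedly into $L^q(\Omega;\bbC^N)$ for every $t>0$ and all $1\le p\le q\le\infty$, with a bound of order $t^{-\frac d2(1/p-1/q)}$ as $t\downarrow 0$.

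It then remains to pass from $\calB_p$ to $\calB_p+V$. Fix $t>0$ and choose a finite chain of exponents $p=q_0<q_1<\dots<q_m=\infty$ with $\tfrac d2(1/q_{j-1}-1/q_j)<1$ for every $j$. By the variation-of-constants formula,
\begin{equation*}
	\ue^{r(\calB_{q_{j-1}}+V)} = \ue^{r\calB_{q_{j-1}}} + \int_0^r \ue^{(r-s)\calB_{q_{j-1}}}\,V\,\ue^{s(\calB_{q_{j-1}}+V)}\dx s ,
\end{equation*}
and estimating the integrand in the operator norm from $L^{q_{j-1}}$ to $L^{q_j}$ by $\norm{\ue^{(r-s)\calB_{q_{j-1}}}}_{q_{j-1}\to q_j}\,\norm{V}\,\norm{\ue^{s(\calB_{q_{j-1}}+V)}}_{q_{j-1}\to q_{j-1}}$ — which is integrable over $s\in(0,r)$ because of the choice of $q_j$ and the ultracontractive bound above — shows that $\ue^{r(\calB_{q_{j-1}}+V)}$ maps $L^{q_{j-1}}$ boundedly into $L^{q_j}$ for every $r>0$. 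Composing $m$ such steps of length $t/m$ and using that the semigroups $(\ue^{s(\calB_q+V)})_{s\ge 0}$ are consistent on the $L^q$-scale (and that, $\Omega$ being bounded, every intermediate image stays in $L^p$) yields that $\ue^{t(\calB_p+V)}$ maps $L^p(\Omega;\bbC^N)$ boundedly into $L^\infty(\Omega;\bbC^N)$, which is~(i). Alternatively one may simply invoke the stability of ultracontractivity under bounded perturbations of the generator.

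\emph{Compactness, part~(ii).} I would first treat $p=2$. Since $\Omega$ is bounded and has the extension property, the embedding $H^1(\Omega;\bbC)\hookrightarrow L^2(\Omega;\bbC)$ is compact, so each generator $\calA_k$ has compact resolvent; being associated with a continuous, elliptic sesquilinear form it generates an analytic semigroup, and therefore $\ue^{t\calA_k}$ is compact on $L^2(\Omega;\bbC)$ for every $t>0$, and hence so is $\ue^{t\calB_2}$ on $L^2(\Omega;\bbC^N)$. As $V$ is bounded, $\ue^{t(\calB_2+V)}$ is again compact for every $t>0$: this is the classical fact that being compact for all positive times is preserved under bounded perturbations of the generator, and a self-contained proof is obtained from the Dyson--Phillips series together with the observation that a semigroup which is compact for all $t>0$ is norm-continuous on $(0,\infty)$, so that each term $\int_0^s\ue^{(s-r)\calB_2}\,V\,S_n(r)\dx r$ of the series is an operator-norm limit of Riemann sums of compact operators. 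For general $p\in[1,\infty]$ I would use that $\Omega$ has finite measure, so $L^\infty(\Omega;\bbC^N)\hookrightarrow L^q(\Omega;\bbC^N)$ for every $q$, and factorise, for $t>0$,
\begin{equation*}
	\ue^{t(\calB_p+V)} = \ue^{(t/3)(\calB_2+V)}\,\ue^{(t/3)(\calB_2+V)}\,\ue^{(t/3)(\calB_p+V)},
\end{equation*}
reading the right-hand side as the composition $L^p\to L^\infty\hookrightarrow L^2\to L^2\to L^\infty\hookrightarrow L^p$, where the first semigroup arrow is bounded by part~(i) (or by Proposition~\ref{prop:l_infty-norm-of-semigroup-on-bounded-domain} if $p=\infty$), the middle one is compact by the case $p=2$ just proved, and the last one is bounded by part~(i) with $p=2$. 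By the consistency of the semigroups on the $L^p$-scale this composition coincides with $\ue^{t(\calB_p+V)}$, and it is compact because one of its factors is.

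\emph{Main obstacle.} No single step is deep; the work lies in assembling and stitching together the right classical ingredients. I expect the genuinely fiddly point to be the transfer of ultracontractivity in part~(i) from $\calB_p$ to $\calB_p+V$: the one-step variation-of-constants estimate only closes when the smoothing exponent $\tfrac d2(1/p-1/q)$ is strictly less than $1$, so in general one has to bootstrap through the finite chain of intermediate exponents and keep careful track of the consistency of the semigroups on the $L^p$-scale; the same book-keeping reappears when identifying the three-fold factorisation in part~(ii) with $\ue^{t(\calB_p+V)}$.
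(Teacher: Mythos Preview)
Your argument is correct, but both parts take a different route from the paper.

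For~(i), the paper observes that $-(\calB_2+V)$ is itself associated with a closed sectorial form on $H^1(\Omega;\bbC^N)$, namely $c(u,v)=b(u,v)-\int_\Omega\langle Vu,v\rangle\dx x$; since the form domain still embeds into $L^q(\Omega;\bbC^N)$ for some $q>2$, the standard ultracontractivity theorem (e.g.\ \cite[Section~7.3.2]{Arendt2004}) applies \emph{directly to the perturbed operator} and gives $L^p\to L^\infty$ boundedness in one stroke. Your two-step approach (prove ultracontractivity for $\calB_p$ first, then transfer it to $\calB_p+V$ via a Duhamel bootstrap through a chain of exponents) is perfectly sound, but it is precisely the work the paper avoids by noting that a bounded zero-order perturbation does not change the form domain.

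For~(ii), the paper again proceeds more directly for $p\in[1,\infty)$: once~(i) is known, Dunford--Pettis/kernel arguments (the semigroup has a bounded integral kernel on a finite measure space) immediately yield compactness on $L^p$, without any separate treatment of $p=2$. Your route --- prove compactness on $L^2$ via compact resolvent and analyticity, check that immediate compactness survives bounded perturbation, and then factor through $L^2$ for general $p$ --- is also correct and arguably more self-contained, since it avoids quoting the Dunford--Pettis machinery. The paper's factorisation for $p=\infty$ is essentially the same as yours, only split into two pieces of length $t/2$ rather than three of length $t/3$.
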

\begin{proof}
	(i) We consider the generator $\calB_2+V$ on $L^2(\Omega;\bbC^N)$. The operator $-\calB_2$ is associated with the form 
	\begin{align*}
	b \colon H^1(\Omega; \bbC^N) \times H^1(\Omega; \bbC^N) \to \bbC, \quad b(u,v) = \sum_{k=1}^N a_k(u_k, v_k).
	\end{align*}
	Hence, $-(\calB_2 + V)$ is associated with the form 
	\begin{align*}
	c \colon H^1(\Omega; \bbC^N) \times H^1(\Omega; \bbC^N) \to \bbC, \quad c(u,v) = b(u, v) - \int_\Omega \langle Vu, v \rangle \dx x.
	\end{align*}	
	As $\Omega$ has the extension property, $H^1(\Omega;\bbC)$ embeds continuously into $L^q(\Omega;\bbC)$ for some $q > 2$, and hence, the form domain $H^1(\Omega;\bbC^N)$ embeds continuously into $L^q(\Omega;\bbC^N)$. Thus, by ultracontractivity (c.f.\ the theorem in \cite[Section~7.3.2]{Arendt2004}) it follows that $\ue^{t(\calB_p + V)}$ maps $L^p(\Omega;\bbC^N)$ into $L^\infty(\Omega;\bbC^N)$ for each $p \in [1,\infty)$. The boundedness of this mapping follows also by ultracontractivity or, alternatively, from the closed graph theorem.
	
	(ii) For $p \in [1,\infty)$ this is a consequence of~(i) and Dunford--Pettis theory, see for instance~\cite[Theorem~7.1]{GlueckSG} for details. For $p = \infty$, observe that $\ue^{t(\calB_\infty + V)}$ factors as
	\begin{align*}
		L^\infty(\Omega;\bbC^N) \xrightarrow{\id} L^2(\Omega;\bbC^N) \xrightarrow{\ue^{\frac{t}{2}(\calB_2 + V)}} L^2(\Omega;\bbC^N) \xrightarrow{\ue^{\frac{t}{2}(\calB_2 + V)}} L^\infty(\Omega;\bbC^N).
	\end{align*}
	Hence, the claim follows from the case $p = 2$.
\end{proof}

As two consequences of the above proposition, boundedness and operator norm convergence of the semigroup does not depend on the choice of $p$.

\begin{corollary} \label{cor:ultracontractive-bounded}
	Set $L^p \coloneqq L^p(\Omega; \bbC^N)$ for every $p \in [1, \infty]$. The following assertions are equivalent:
	\begin{enumerate}[\upshape (i)]
		\item There exists $p \in [1,\infty]$ such that the semigroup $(\ue^{t(\calB_p+V)})_{t \geq 0}$ is bounded on $L^p$.
		
		\item For every $p \in [1,\infty]$ the semigroup $(\ue^{t(\calB_p+V)})_{t \geq 0}$ is bounded on $L^p$.
	\end{enumerate}
\end{corollary}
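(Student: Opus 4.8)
The implication (ii)$\Rightarrow$(i) is trivial, so the plan is to prove (i)$\Rightarrow$(ii) by exploiting the smoothing property from Proposition~\ref{prop:ultracontractive}(i) together with the consistency of the semigroups on the $L^p$-scale and the finiteness of $\modulus{\Omega}$. Assume that $M \coloneqq \sup_{t \ge 0} \norm{\ue^{t(\calB_{p_0}+V)}}_{p_0 \to p_0} < \infty$ for some fixed $p_0 \in [1,\infty]$, let $q \in [1,\infty]$ be arbitrary, and write $T_r(t) \coloneqq \ue^{t(\calB_r + V)}$ for brevity; we must show $\sup_{t \ge 0} \norm{T_q(t)}_{q \to q} < \infty$.

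First I would take care of small times. If $q \in [1,\infty)$, then $(T_q(t))_{t \ge 0}$ is a $C_0$-semigroup, hence exponentially bounded, so in particular $\sup_{t \in [0,1]} \norm{T_q(t)}_{q \to q} < \infty$; if $q = \infty$, the same bound is immediate from the estimate $\norm{T_\infty(t)}_{\infty \to \infty} \le \ue^{t\omega}$ furnished by Proposition~\ref{prop:l_infty-norm-of-semigroup-on-bounded-domain}. Hence it remains to bound $\norm{T_q(t)}_{q \to q}$ uniformly over $t \ge 1$.

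For $t \ge 1$, I would use the factorisation $T_q(t) = T_{p_0}(\tfrac12)\, T_{p_0}(t-1)\, T_q(\tfrac12)$, which is justified by the semigroup law together with the consistency of the semigroups across the $L^p$-scale, and read it along the chain
\begin{equation*}
	\begin{aligned}
		L^q \xrightarrow{T_q(1/2)} L^\infty \hookrightarrow L^{p_0} &\xrightarrow{T_{p_0}(t-1)} L^{p_0} \\
		&\xrightarrow{T_{p_0}(1/2)} L^\infty \hookrightarrow L^q,
	\end{aligned}
\end{equation*}
with $L^r = L^r(\Omega;\bbC^N)$ as in the statement. The two outer exponentials $T_q(\tfrac12)$ and $T_{p_0}(\tfrac12)$ are bounded as maps into $L^\infty$ by Proposition~\ref{prop:ultracontractive}(i) (which covers every base exponent in $[1,\infty]$), the middle operator $T_{p_0}(t-1)$ has norm at most $M$, and the two inclusions $L^\infty \hookrightarrow L^{p_0}$ and $L^\infty \hookrightarrow L^q$ are bounded because $\Omega$ has finite measure, with norms $\modulus{\Omega}^{1/p_0}$ and $\modulus{\Omega}^{1/q}$ (read as $1$ when the exponent is $\infty$). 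Multiplying the five operator norms gives a bound on $\norm{T_q(t)}_{q\to q}$ that does not depend on $t \ge 1$; combined with the small-time estimate, this proves (ii). (As an alternative for the last step, one could establish boundedness on $L^1$ and on $L^\infty$ by this device and then obtain every intermediate $q$ by Riesz--Thorin interpolation.)

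I do not expect a genuine obstacle: the statement is essentially a repackaging of ultracontractivity. The points that require a little care are (a) checking that the displayed composition of operators acting between different $L^p$-spaces really represents $T_q(t)$ --- this is exactly where the consistency statements of Subsection~\ref{subsection:behaviour-on-the-lp-scale} (and, for $q < \infty$, the density of $L^\infty$ in $L^q$) enter; (b) the case $q = \infty$, where no strongly continuous semigroup is available and the role of the $C_0$-estimate on $[0,1]$ is instead played by Proposition~\ref{prop:l_infty-norm-of-semigroup-on-bounded-domain}; and (c) the degenerate case $p_0 = \infty$, in which the middle factor already acts on $L^\infty$ and one only needs to observe that the relevant embedding constants are still finite.
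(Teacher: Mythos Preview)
Your proof is correct and follows essentially the same approach as the paper's own proof: both split into small and large times, handle small times via the $C_0$-property (resp.\ Proposition~\ref{prop:l_infty-norm-of-semigroup-on-bounded-domain} for $q=\infty$), and handle large times by the ultracontractive factorisation $L^q \to L^\infty \hookrightarrow L^{p_0} \to L^{p_0} \to L^\infty \hookrightarrow L^q$. The only differences are cosmetic---the paper uses the cut-off $t \ge 2$ with outer exponentials at time $1$ instead of your $t \ge 1$ with time $1/2$.
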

\begin{proof}
	Obviously,~(ii) implies~(i), so assume conversely that~(i) holds and consider any $q \in [1,\infty]$. For each $t \ge 2$, the operator $\ue^{t(\calB_q+V)}$ factors as
	\begin{align*}
		L^q \xrightarrow{\ue^{\calB_q+V}} L^\infty \xrightarrow{\id} L^p \xrightarrow{\ue^{(t-2)(\calB_p+V)}} L^p \xrightarrow{\ue^{\calB_p+V}} L^\infty \xrightarrow{\id} L^q.
	\end{align*}
	Therefore, $\sup_{t \in [2,\infty)} \norm{\ue^{t(\calB_q+V)}} < \infty$. On the other hand, observe that
	\begin{align*}
		\sup_{t \in [0,2]} \big \lVert \ue^{t(\calB_q+V)} \big \rVert < \infty;
	\end{align*}
	this follows from the $C_0$-property for $q \in [1,\infty)$ and from Proposition~\ref{prop:l_infty-norm-of-semigroup-on-bounded-domain} for $q = \infty$.
\end{proof}

\begin{corollary} \label{cor:ultracontractive-convergent}
	Set $L^p \coloneqq L^p(\Omega; \bbC^N)$ for every $p \in [1, \infty]$. The following assertions are equivalent:
	\begin{enumerate}[\upshape (i)]
		\item There exists $p \in [1,\infty]$ such that $\ue^{t(\calB_p + V)}$ converges with respect to the operator norm on $L^p$ as $t \to \infty$.
		
		\item For every $p \in [1,\infty]$ the operator $\ue^{t(\calB_p + V)}$ converges with respect to the operator norm on $L^p$ as $t \to \infty$.
		
		\item For every $p \in [1,\infty]$ the operator $\ue^{t(\calB_p + V)}$ converges with respect to the operator norm in $\calL(L^p; L^\infty)$ as $t \to \infty$.
	\end{enumerate}
\end{corollary}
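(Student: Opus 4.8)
The plan is to prove the cycle of implications (iii) $\Rightarrow$ (ii) $\Rightarrow$ (i) $\Rightarrow$ (iii). Two of these are immediate: since $\Omega$ is a bounded domain, the inclusion $L^\infty(\Omega;\bbC^N) \hookrightarrow L^p(\Omega;\bbC^N)$ is bounded for every $p \in [1,\infty]$, so composing a net that converges in $\calL(L^p;L^\infty)$ with this inclusion produces a net that converges in $\calL(L^p;L^p)$; this yields (iii) $\Rightarrow$ (ii), while (ii) $\Rightarrow$ (i) is trivial. Hence the only real content is the implication (i) $\Rightarrow$ (iii), which I would establish by the same ultracontractive factorisation used in the proof of Corollary~\ref{cor:ultracontractive-bounded}.

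Concretely, suppose $p \in [1,\infty]$ is such that $\ue^{t(\calB_p+V)} \to P$ in operator norm on $L^p$ as $t \to \infty$, for some $P \in \calL(L^p)$, and let $q \in [1,\infty]$ be arbitrary. For $t \ge 2$, I would use the consistency of the semigroups on the $L^p$-scale together with Proposition~\ref{prop:ultracontractive}(i) to factor $\ue^{t(\calB_q+V)}$, regarded as an operator from $L^q$ into $L^\infty$, as
\[
L^q \xrightarrow{\ue^{\calB_q+V}} L^\infty \xrightarrow{\id} L^p \xrightarrow{\ue^{(t-2)(\calB_p+V)}} L^p \xrightarrow{\ue^{\calB_p+V}} L^\infty .
\]
In other words, $\ue^{t(\calB_q+V)} = S\,\ue^{(t-2)(\calB_p+V)}\,T$, where $S \coloneqq \ue^{\calB_p+V} \in \calL(L^p;L^\infty)$ and $T \in \calL(L^q;L^p)$ is the composition of $\ue^{\calB_q+V} \in \calL(L^q;L^\infty)$ with the inclusion $L^\infty \hookrightarrow L^p$; crucially, both $S$ and $T$ are fixed and do not depend on $t$.

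The estimate
\[
\norm{\ue^{t(\calB_q+V)} - SPT}_{\calL(L^q;L^\infty)} \le \norm{S}_{\calL(L^p;L^\infty)}\, \norm{\ue^{(t-2)(\calB_p+V)} - P}_{\calL(L^p)}\, \norm{T}_{\calL(L^q;L^p)}
\]
then shows that $\ue^{t(\calB_q+V)} \to SPT$ in $\calL(L^q;L^\infty)$ as $t \to \infty$, which is assertion (iii). I expect no serious obstacle here; the only point that needs a little care is to verify that the middle factor $\ue^{(t-2)(\calB_p+V)}$ in the diagram genuinely coincides, after the relevant restriction and extension, with the corresponding piece of $\ue^{t(\calB_q+V)}$ — this is exactly the consistency of the semigroups on the $L^p$-scale, which by the convention introduced before Proposition~\ref{prop:ultracontractive} also makes sense at the endpoint $p = \infty$, so the argument covers the case $q = \infty$ and the case $p = \infty$ in (i) without modification.
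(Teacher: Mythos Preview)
Your proposal is correct and follows essentially the same route as the paper: the implications (iii) $\Rightarrow$ (ii) $\Rightarrow$ (i) are declared obvious there as well, and your factorisation for (i) $\Rightarrow$ (iii) is exactly the one the paper uses. If anything, your write-up is slightly more explicit (naming $S$, $T$ and writing the norm estimate) than the paper's one-line conclusion from the diagram.
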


\begin{proof}
	Obviously,~(iii) implies~(ii) and~(ii) implies~(i). 
	
	(i) $\Rightarrow$ (iii): Let $q \in [1, \infty]$. For $t \geq 2$ the operator $\ue^{t(\calB_q+V)} \coloneqq L^q \to L^\infty$ factors as
	\begin{align*}
		L^q \xrightarrow{\ue^{\calB_q+V}} L^\infty \xrightarrow{\id} L^p \xrightarrow{\ue^{(t-2)(\calB_p+V)}} L^p \xrightarrow{\ue^{\calB_p+V}} L^\infty.
	\end{align*}
	Therefore, $\ue^{t(\calB_p+V)}$ converges in $\calL(L^q; L^\infty)$ with respect to the operator norm as $t \to \infty$.
\end{proof}

Corollary~\ref{cor:ultracontractive-bounded} and~\ref{cor:ultracontractive-convergent} show that if the solutions to the coupled Cauchy problem~\eqref{eq:coupled-heat-equation-on-bounded-domain-general} are bounded or converge uniformly in one $p$-norm, they are bounded or converge uniformly in every $p$-norm, respectively. This motivates the following terminology that will be used throughout the rest of the article.

\begin{definition}
	\label{def:uniform-boundedness-convergence-for-coupled-heat-equation}
	\begin{enumerate}[\hspace{6pt} \upshape (a)] 
		\item
		We say that the solutions to the coupled heat 
		equation~\eqref{eq:coupled-heat-equation-on-bounded-domain-general} 
		are \emph{uniformly bounded} if one, and thus all, 
		of the equivalent assertions from 
		Corollary~\ref{cor:ultracontractive-bounded} are satisfied.
		
		\item
		We say that the solutions to the coupled heat equation~\eqref{eq:coupled-heat-equation-on-bounded-domain-general} 
		\emph{converge uniformly} as $t \to \infty$ if one, 
		and thus all, of the equivalent assertions of 
		Corollary~\ref{cor:ultracontractive-convergent} are satisfied.
	\end{enumerate}
\end{definition}

In Section~\ref{section:convergence} we provide sufficient conditions for the uniform convergence in the sense of Definition~\ref{def:uniform-boundedness-convergence-for-coupled-heat-equation} of the solutions to~\eqref{eq:coupled-heat-equation-on-bounded-domain-general} as $t \to \infty$.

\subsection{Dissipativity} \label{subsection:dissipativity}

In view of Corollary~\ref{cor:ultracontractive-bounded}, boundedness of the solution semigroup to~\eqref{eq:coupled-heat-equation-on-bounded-domain-general} for one $p$ implies boundedness on the entire $L^p$-scale. The easiest way to obtain boundedness for some $p \in [1,\infty]$ is to assume that the multiplication operator $V$ is dissipative on $L^p(\Omega;\bbC^N)$. This is discussed in a detail in this subsection. For a general treatment of the concept \emph{dissipativity}, including its definition and various characterizations, we refer for instance to \cite[Section~II.3.b]{Engel2000}.

\begin{proposition} \label{prop:dissipative-potential-implies-contractive-semigroup}
	Let $p \in [1,\infty]$. If $V$ is dissipative on $L^p(\Omega;\bbC^N)$, then the semigroup $(\ue^{t(\calB_p + V)})_{t \geq 0}$ is contractive on $L^p(\Omega;\bbC^N)$.
\end{proposition}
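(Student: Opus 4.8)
The plan is to reduce the case of general $p$ to $p=2$ and $p=\infty$, and to handle those two via the standard correspondence between dissipativity of a generator and contractivity of the semigroup (see \cite[Section~II.3.b]{Engel2000}), combined with Trotter's product formula as in the proof of Proposition~\ref{prop:l_infty-norm-of-semigroup-on-bounded-domain}. First I would recall that $\calB_p$ itself generates a contraction semigroup on $L^p(\Omega;\bbC^N)$ for every $p \in [1,\infty)$ (and that $(\ue^{t\calB_\infty})_{t \ge 0}$ is $L^\infty$-contractive), since each $\calA_{k,p}$ does by construction; equivalently, $\calB_p$ is dissipative on $L^p(\Omega;\bbC^N)$.

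For $p \in [1,\infty)$, the argument I would give is essentially a repetition of the Trotter estimate already used above. By Trotter's product formula, $\ue^{t(\calB_p+V)}f = \lim_{n\to\infty}\bigl(\ue^{\frac{t}{n}\calB_p}\ue^{\frac{t}{n}V}\bigr)^n f$ in $L^p$ for each $f \in L^p(\Omega;\bbC^N)$. Since $V$ is dissipative on $L^p(\Omega;\bbC^N)$, the semigroup $(\ue^{sV})_{s\ge 0}$ generated by the bounded operator $V$ is contractive on $L^p(\Omega;\bbC^N)$ (this is precisely the Lumer--Phillips correspondence for bounded generators, cf.\ \cite[Section~II.3.b]{Engel2000}). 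Each factor $\ue^{\frac{t}{n}\calB_p}$ is an $L^p$-contraction and each factor $\ue^{\frac{t}{n}V}$ is an $L^p$-contraction, so $\bigl(\ue^{\frac{t}{n}\calB_p}\ue^{\frac{t}{n}V}\bigr)^n$ is an $L^p$-contraction for every $n$, and passing to the limit gives $\norm{\ue^{t(\calB_p+V)}f}_p \le \norm{f}_p$. Hence $(\ue^{t(\calB_p+V)})_{t\ge 0}$ is contractive on $L^p(\Omega;\bbC^N)$.

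For $p=\infty$, recall that $\ue^{t(\calB_\infty+V)}$ was defined as the restriction of $\ue^{t(\calB_q+V)}$ (any $q\in[1,\infty)$) to $L^\infty(\Omega;\bbC^N)$, and that the exponentials $\ue^{sV}$ are consistent across the $L^p$-scale. Dissipativity of $V$ on $L^\infty(\Omega;\bbC^N)$ gives, again by the Lumer--Phillips correspondence for the bounded operator $V$, that $\norm{\ue^{sV}}_{\infty\to\infty}\le 1$ for all $s\ge 0$. Then I would run the Trotter product formula in $L^q$ for some fixed $q \in [1,\infty)$ exactly as in Proposition~\ref{prop:l_infty-norm-of-semigroup-on-bounded-domain}: for $f$ in the closed unit ball of $L^\infty$, each $\ue^{\frac{t}{n}V}$ keeps $f$ in the unit ball of $L^\infty$ (by $L^\infty$-contractivity of $\ue^{sV}$) and each $\ue^{\frac{t}{n}\calB_q}$ does so too (by $L^\infty$-contractivity of the semigroup generated by $\calB_q$), so every approximant lies in the unit ball of $L^\infty$; since that ball is closed in $L^q$, the limit $\ue^{t(\calB_q+V)}f$ lies in it as well, i.e.\ $\norm{\ue^{t(\calB_\infty+V)}}_{\infty\to\infty}\le 1$.

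The only genuinely substantive point is the passage from "$V$ dissipative on $L^p$" to "$(\ue^{sV})_{s\ge0}$ contractive on $L^p$'' for $p \in \{1,\infty\}$ as well as $p \in (1,\infty)$; for bounded $V$ this is immediate from the standard characterisation of dissipativity, so there is no real obstacle here, and for $p=\infty$ one must additionally observe that dissipativity is stable under the consistent realisations of $V$, which follows from the description of the duality map on $L^\infty$. Everything else is a direct transcription of the Trotter argument already present in the proof of Proposition~\ref{prop:l_infty-norm-of-semigroup-on-bounded-domain}.
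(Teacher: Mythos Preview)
Your proof is correct, but the route for $p \in [1,\infty)$ differs from the paper's. The paper argues more directly: since $\calB_p$ generates a contraction $C_0$-semigroup, it is dissipative; then \cite[Proposition~II.3.23]{Engel2000} gives that the sum $\calB_p + V$ of a dissipative operator and a bounded dissipative operator is again dissipative, and since $\calB_p + V$ is already known to generate a $C_0$-semigroup, that semigroup is contractive. You instead run Trotter's product formula and use that each factor $\ue^{\frac{t}{n}\calB_p}$ and $\ue^{\frac{t}{n}V}$ is a contraction. Both arguments are clean; the paper's avoids the (mild) appeal to convergence of the Trotter approximants, while yours has the virtue of being uniform with the $p=\infty$ case. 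For $p=\infty$ the paper simply observes that dissipativity of $V$ on $L^\infty$ gives $\norm{\ue^{tV}}_{\infty\to\infty}\le 1$ and then invokes Proposition~\ref{prop:l_infty-norm-of-semigroup-on-bounded-domain} with $\omega=0$; your argument is the same, except that you spell out the Trotter estimate again rather than citing that proposition. One small remark: your opening sentence announces a reduction to $p=2$ and $p=\infty$, but the proof you actually give treats all $p\in[1,\infty)$ uniformly --- you may want to adjust the wording.
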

\begin{proof}
	First, assume that $p \in [1,\infty)$. Then $\calB_p$ generates a contractive $C_0$-semigroup on $L^p(\Omega;\bbC^N)$. It follows from the characterization of dissipativity in \cite[Proposition~II.3.23]{Engel2000} that $\calB_p+V$ is dissipative, too. 
	
	If $p = \infty$, then $\norm{\ue^{tV}}_{\infty \to \infty} \le 1$ for each $t \geq 0$. Hence, the assertion follows from Proposition~\ref{prop:l_infty-norm-of-semigroup-on-bounded-domain}.
\end{proof}

Proposition~\ref{prop:dissipative-potential-implies-contractive-semigroup} shows why dissipativity of the multiplication operator $V$ on $L^p(\Omega;\bbC^N)$ is of interest in our setting. 
Next, we note that dissipativity of $V$ can be characterized in terms of the matrices $V(x)$:

\begin{proposition} \label{prop:p-dissipative}
	For each $p \in [1, \infty]$ the following assertions are equivalent:
	\begin{enumerate}[\upshape (i)]
		\item The multiplication operator $V$ on $L^p(\Omega;\bbC^N)$ is dissipative.
		\item For almost all $x \in \Omega$, the matrix $V(x)$ is dissipative with respect to the $\ell^p$-norm on $\bbC^N$.
	\end{enumerate}
	If, in addition, $V(x) \in \bbR^{N \times N}$ almost all $x \in \Omega$, then the above assertions are also equivalent to:
	\begin{enumerate}[\upshape (i)] \setcounter{enumi}{2}
		\item For almost all $x \in \Omega$, the matrix $V(x)$ is dissipative with respect to the $\ell^p$-norm on $\bbR^N$.
	\end{enumerate}
\end{proposition}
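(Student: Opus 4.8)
The plan is to reduce dissipativity of the multiplication operator $V$ to a pointwise statement about the matrices $V(x)$ by passing from $V$ to the semigroups $(\ue^{tV})_{t\ge 0}$ that it generates on the spaces $L^p(\Omega;\bbC^N)$. Two elementary observations drive this. First, since $V$ is a bounded operator on $L^p(\Omega;\bbC^N)$ for every $p\in[1,\infty]$, it generates the uniformly continuous (hence, in the norm topology, $C_0$-)semigroup $(\ue^{tV})_{t\ge 0}$, and a bounded operator on a Banach space is dissipative if and only if the semigroup it generates is contractive (Lumer--Phillips, the range condition being automatic for bounded operators); so~(i) is equivalent to $\norm{\ue^{tV}}_{p\to p}\le 1$ for all $t\ge 0$. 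Second, I would establish that for any bounded measurable $\Phi\colon\Omega\to\bbC^{N\times N}$ the multiplication operator $M_\Phi$ on $L^p(\Omega;\bbC^N)$ satisfies $\norm{M_\Phi}_{p\to p}=\operatorname*{ess\,sup}_{x\in\Omega}\norm{\Phi(x)}_{\ell^p\to\ell^p}$, where $\norm{\argument}_{\ell^p\to\ell^p}$ is the operator norm on $(\bbC^N,\norm{\argument}_p)$; the ``$\le$'' part is immediate from~\eqref{eq:vector-valued-lp-norm}, while for ``$\ge$'' one tests $M_\Phi$ against functions $\eta\,\mathbf{1}_S$ with $\eta$ a suitable near-maximizing unit vector. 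Applying the second observation with $\Phi(x)=\ue^{tV(x)}$ (which is bounded and measurable in $x$, as $B\mapsto\ue^{tB}$ is continuous) then gives $\norm{\ue^{tV}}_{p\to p}=\operatorname*{ess\,sup}_{x}\norm{\ue^{tV(x)}}_{\ell^p\to\ell^p}$ for every $t\ge 0$.

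From here the equivalences fall out. For ``(ii)$\Rightarrow$(i)'': if each $V(x)$ is $\ell^p$-dissipative then $\norm{\ue^{tV(x)}}_{\ell^p\to\ell^p}\le 1$ almost everywhere for every $t$, so $\norm{\ue^{tV}}_{p\to p}\le 1$ and~(i) follows. For ``(i)$\Rightarrow$(ii)'': (i) gives $\operatorname*{ess\,sup}_{x}\norm{\ue^{tV(x)}}_{\ell^p\to\ell^p}\le 1$ for every $t\ge 0$; fixing a countable dense set $T\subseteq[0,\infty)$ produces a single null set outside of which $\norm{\ue^{tV(x)}}_{\ell^p\to\ell^p}\le 1$ for all $t\in T$, hence — by continuity of $t\mapsto\ue^{tV(x)}$ — for all $t\ge 0$, and contractivity of $(\ue^{tV(x)})_{t\ge 0}$ on the finite-dimensional space $(\bbC^N,\norm{\argument}_p)$ is precisely $\ell^p$-dissipativity of $V(x)$. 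For the additional equivalence with~(iii) when $V(x)$ is real almost everywhere, I would use that each $\ue^{tV(x)}$ is then a real matrix and that for a real matrix $B$ the operator norm on $(\bbC^N,\norm{\argument}_p)$ equals the operator norm on $(\bbR^N,\norm{\argument}_p)$ (one of the characterizations of $\ell^p$-dissipativity recalled in the appendix); running the same argument with $\bbR^N$ in place of $\bbC^N$ then yields (i)$\Leftrightarrow$(iii).

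The step I expect to require the most care is the lower bound $\norm{M_\Phi}_{p\to p}\ge\operatorname*{ess\,sup}_{x}\norm{\Phi(x)}_{\ell^p\to\ell^p}$: the ``pointwise near-maximizing vector'' must be chosen measurably, which I would handle by reducing to a fixed countable dense subset of the $\ell^p$-unit sphere of $\bbC^N$ and using that $x\mapsto\norm{\Phi(x)\eta}_p$ is measurable for each such $\eta$, with the case $p=\infty$ needing the minor extra bookkeeping that the relevant norm on $L^\infty(\Omega;\bbC^N)$ is the one fixed in~\eqref{eq:vector-valued-lp-norm}. An alternative route for $p\in[1,\infty)$ would be to argue directly with the subgradient characterization of dissipativity — for $1<p<\infty$ the duality map of $L^p(\Omega;\bbC^N)$ at $u$ is $x\mapsto\norm{u}_p^{2-p}\bigl(|u_k(x)|^{p-2}\overline{u_k(x)}\bigr)_{k=1}^N$, so testing $\re\langle Vu,\argument\rangle\le 0$ against $u=\eta\,\mathbf{1}_S$ again localizes the inequality to almost every $x$ — but this would force a separate treatment of the set-valued duality maps at $p\in\{1,\infty\}$, whereas the semigroup argument above covers all $p\in[1,\infty]$ uniformly.
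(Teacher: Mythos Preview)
Your proposal is correct. For the equivalence (ii)$\Leftrightarrow$(iii) you argue exactly as the paper does: pass to the semigroups $\ue^{tV(x)}$ and use that a real matrix has the same operator norm with respect to the $\ell^p$-norm on $\bbR^N$ and on $\bbC^N$. One small inaccuracy: this operator-norm fact is not among the characterizations collected in the appendix; the paper instead cites \cite[Proposition~2.1.1]{Fendler1998} for $p\in[1,\infty)$ and supplies the identity $\norm{\xi}_\infty=\sup_{\theta\in[0,2\pi]}\norm{\re(\ue^{\ui\theta}\xi)}_\infty$ for $p=\infty$.

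For (i)$\Leftrightarrow$(ii) your route is genuinely more explicit than the paper's. The paper simply declares the equivalence an ``immediate consequence'' of the choice of norm in~\eqref{eq:vector-valued-lp-norm} --- the point being that $L^p(\Omega;\bbC^N)$ is isometrically a scalar $L^p$-space over $N$ disjoint copies of $\Omega$ (Remark~\ref{rem:advantage-of-choice-of-norm}), so the duality map, and hence dissipativity, localizes pointwise. You instead translate dissipativity into contractivity of $(\ue^{tV})_{t\ge 0}$, prove the formula $\norm{M_\Phi}_{p\to p}=\operatorname*{ess\,sup}_x\norm{\Phi(x)}_{\ell^p\to\ell^p}$, and specialize to $\Phi=\ue^{tV}$. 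This is a longer but perfectly valid detour; as you observe yourself, its payoff is a uniform treatment of all $p\in[1,\infty]$ that sidesteps the set-valued duality maps at $p\in\{1,\infty\}$. Your handling of the measurability and null-set bookkeeping (countable dense $T\subseteq[0,\infty)$, countable dense subset of the unit sphere) is the right way to make the pointwise statements rigorous.
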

\begin{proof}
	The equivalence of~(i) and~(ii) is an immediate consequence of our choice of the norm on $L^p(\Omega;\bbC^d)$ (see formula~\eqref{eq:vector-valued-lp-norm}).
	
	Now assume that the matrix $V(x)$ has only real entries for almost all $x \in \Omega$.
	The implication from~(ii) to~(iii) is obvious.
	To show that~(iii) implies~(ii) note that, for every matrix $M \in \bbR^{N \times N}$, its operator norm induced by the $p$-norm on $\bbR^N$ coincides with its operator norm induced by the $p$-norm on $\bbC^N$. Indeed, for $p \in [1,\infty)$ this is \cite[Proposition~2.1.1]{Fendler1998}, and for $p = \infty$ this follows from the identity
	\begin{align*}
		\norm{\xi}_\infty = \sup_{\theta \in [0,2\pi]} \norm{\re(\ue^{\ui\theta}\xi)}_\infty 
		\qquad \text{for all } \xi \in \bbC^N. 
	\end{align*}
	So overall, if $\ue^{tV(x)}$ is contractive on $(\bbR^N, \norm{\argument}_p)$, then it is also contractive on $(\bbC^N, \norm{\argument}_p)$, i.e., (iii) implies~(ii).
\end{proof}

In view of Proposition~\ref{prop:p-dissipative} it is worthwhile to recall that dissipativity of matrices with respect to the $\ell^p$-norm on $\bbR^N$ can be characterized quite explicitly. For the convenience of the reader, said characterization is presented in Proposition~\ref{prop:characterization-of-ell_p-dissipative-matrices} in the appendix.

\section{Convergence to equilibrium} \label{section:convergence}

In this section we prove several criteria under which the solutions to the coupled heat equation~\eqref{eq:coupled-heat-equation-on-bounded-domain-general} converge uniformly in the sense of Definition~\ref{def:uniform-boundedness-convergence-for-coupled-heat-equation}(b). In order to provide a simple but illuminating illustration and comparison of our results, in each subsection the long-term behaviour of the solutions to a concrete, simple toy example is discussed. Namely, we consider the evolution equation
\begin{align}
	\label{eq:toy-example}
	\begin{pmatrix}
		\dot u_1 \\ \dot u_2
	\end{pmatrix}
	=
	\begin{pmatrix}
		\Delta u_1 \\ \Delta u_2
	\end{pmatrix}
	+
	V
	\begin{pmatrix}
		u_1 \\ u_2
	\end{pmatrix}
\end{align}
for a potential $V \colon \Omega \to \bbR^{2 \times 2}$ subject to Neumann boundary conditions on $\Omega$. However, the results from these sections hold, of course, for much more general settings.

The following table provides an overview of the following subsections and the respective convergence results proven in each of them.

\begin{figure}[h]
	\begin{center}
		\begin{tabularx}{\textwidth}{ |c|>{\centering}X|>{\centering}X|c| } 
			\hline
			
			Section &
			Further conditions on the coefficients $A_k$ &
			Further conditions on the potential $V$ &
			Convergence results

			\\
			\hline
			
			3.1 &
			none &
			$V$ real and quasi-positive a.e.; solutions bounded &
			\begin{tabular}{c}
				Theorem~\ref{theorem:convergence-quasipositive-potential}, \\
				Proposition~\ref{prop:boundedness-quasipositive-potential}
			\end{tabular}
			
			\\ 
			\hline
			
			3.2 &
			none &
			$V$ $\ell^2$-dissipative a.e. &
			Theorem~\ref{thm:convergence-for-coupled-heat-equation-l2-case} 
			
			\\ 
			\hline 
			
			3.3 &
			none &
			$V$ real and $\ell^p$-dissipative a.e.\ for some $p \neq 2$ &
			Theorem~\ref{thm:convergence-for-coupled-heat-equation-lp-case} 
			
			\\ 
			\hline
			
			3.4 &
			all $A_k$ coincide &
			$V$ constant a.e. &
			Proposition~\ref{prop:constant-potential} 
			
			\\ 
			\hline 
			
			3.5 &
			all $A_k$ coincide &
			$V$ simultaneously diagonalizable a.e., real part of all eigenvalues is $\le 0$ a.e.\ &
			Proposition~\ref{prop:decoupled-system} 
			
			\\ 
			\hline
		\end{tabularx}
	\end{center}
\end{figure}

First, we turn our attention to situations in which the long-term behaviour of the system can be investigated by employing arguments from classical Perron--Frobenius theory. 

\subsection{Convergence for quasi-positive potentials} 
\label{subsection:quasi-positive-potentials}

A matrix $A \in \bbR^{N \times N}$ is called \textit{quasi-positive} if all off-diagonal entries of $A$ are $\ge 0$. In this section, it is shown that the solutions to the coupled parabolic equation~\eqref{eq:coupled-heat-equation-on-bounded-domain-general} converge uniformly if they are bounded and if the potential $V$ is quasi-positive almost everywhere. The key insight is that the quasi-positivity of $V$ yields that the solution semigroup is positive.
We call an element $f \in L^2(\Omega;\bbC^N)$ \emph{positive} if, for each $k \in \{1, \ldots, N\}$, the component function $f_k$ 
takes values in $[0,\infty)$ almost everywhere. 
A linear operator on $L^2(\Omega;\bbC^N)$ is called \emph{positive} if it maps each positive function to a positive function.

\begin{proposition} 
	\label{prop:quasipositive-potential}
	If $V(x)$ is in $\bbR^{N \times N}$ and quasi-positive for almost every $x \in \Omega$, then the semigroup $(\ue^{t(\calB_2 + V)})_{t \geq 0}$ consists of positive operators. 
\end{proposition}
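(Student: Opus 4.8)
The plan is to reduce positivity of the perturbed semigroup to a combination of the positivity of the unperturbed semigroup $(\ue^{t\calB_2})_{t\ge0}$ and a shifting trick that turns the quasi-positive matrix-valued potential into a genuinely positive one. First I would recall that each $\ue^{t\calA_k}$ is positive on $L^2(\Omega;\bbC)$ by the assumptions in Subsection~\ref{subsection:the-equation}, and hence the diagonal semigroup $(\ue^{t\calB_2})_{t\ge0}$ is positive on $L^2(\Omega;\bbC^N)$ (acting componentwise). Next, since $V$ is bounded, choose $\omega\in\bbR$ large enough that $V(x)+\omega I$ has \emph{all} entries $\ge 0$ for almost every $x\in\Omega$: the off-diagonal entries are already $\ge0$ by quasi-positivity, and the diagonal entries become nonnegative once $\omega\ge\norm{V}_{\infty\to\infty}$. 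Then the multiplication operator $W\coloneqq V+\omega I$ on $L^2(\Omega;\bbC^N)$ is a positive operator, because it acts pointwise by multiplication with entrywise-nonnegative matrices, which map the positive cone of $\bbC^N$ into itself.

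The core step is then to write $\ue^{t(\calB_2+V)} = \ue^{-\omega t}\,\ue^{t(\calB_2+W)}$ and to show that $\ue^{t(\calB_2+W)}$ is positive. For this I would invoke a standard perturbation representation of the perturbed semigroup: since $W$ is bounded, the Dyson--Phillips series
\begin{align*}
	\ue^{t(\calB_2+W)}
	= \sum_{n=0}^{\infty} S_n(t),
	\qquad
	S_0(t)=\ue^{t\calB_2},
	\quad
	S_{n+1}(t)=\int_0^t \ue^{(t-s)\calB_2}\,W\,S_n(s)\dx s,
\end{align*}
converges in operator norm, uniformly on compact time intervals. Each term $S_n(t)$ is a positive operator: this follows by induction, since $S_0(t)=\ue^{t\calB_2}$ is positive, $W$ is positive, and the integral of a (strongly measurable) family of positive operators against the positive operators $\ue^{(t-s)\calB_2}$ is again positive (the positive cone of $\calL(L^2(\Omega;\bbC^N))$ is closed under composition and under the relevant integration). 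Since the positive cone is closed in operator norm, the norm-convergent sum $\ue^{t(\calB_2+W)}=\sum_n S_n(t)$ is positive as well. Multiplying by the positive scalar $\ue^{-\omega t}$ preserves positivity, which yields the claim. As an alternative to the Dyson--Phillips series one could use Trotter's product formula, as in the proof of Proposition~\ref{prop:l_infty-norm-of-semigroup-on-bounded-domain}: $\ue^{t(\calB_2+W)}f=\lim_{n\to\infty}(\ue^{\frac tn\calB_2}\ue^{\frac tn W})^n f$, and each factor $\ue^{\frac tn\calB_2}$ and $\ue^{\frac tn W}=\ue^{\frac tn\omega}\ue^{\frac tn V}$ is positive (the latter because $\ue^{sW}$ for $s\ge0$ is multiplication by $\exp(sW(x))$, which has nonnegative entries as $W(x)$ is entrywise nonnegative), so each finite product is positive and so is the strong limit, the positive cone of $L^2(\Omega;\bbC^N)$ being closed.

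The main obstacle, though a mild one, is the bookkeeping around positivity of the matrix exponential: one must be careful that $\exp(sW(x))$ really is entrywise nonnegative for $s\ge0$ when $W(x)$ has nonnegative entries (this is immediate from the power series, all of whose terms are entrywise nonnegative), and that the pointwise multiplication operator inherits positivity on $L^2(\Omega;\bbC^N)$ under the identification of Remark~\ref{rem:advantage-of-choice-of-norm}. Everything else is a routine closedness-of-the-cone argument. I would present the Trotter-product version since it matches the style already used in the paper and avoids discussing convergence of the Dyson--Phillips series.
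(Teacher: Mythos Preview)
Your proof is correct and follows essentially the same route as the paper: shift the potential by a large constant to make it entrywise nonnegative, use standard perturbation theory (which the paper leaves unspecified, while you spell out both the Dyson--Phillips and Trotter variants) to see that the shifted semigroup is positive, and then rescale. The paper's proof is just a terser version of yours.
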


\begin{proof}
	As $V \colon \Omega \to \bbR^{N \times N}$ is bounded and quasi-positive almost everywhere, there exists $c > 0$ such that $V + c$ is positive almost everywhere; 
	here, we use $V+c$ as an abbreviation for $V + c\id$ where $\id$ is the $N \times N$-identity matrix. 
	Hence, by standard perturbation theory, the semigroup $(\ue^{t(\calB_2 + V + c)})_{t \geq 0}$ is positive. By rescaling one thus obtains
	\begin{align*}
		\ue^{t(\calB_2 + V)} = \ue^{- c t} \ue^{t(\calB_2 + V + c)} \geq 0 
		\qquad \text{for all } t \geq 0,
	\end{align*}
	i.e., $(\ue^{t(\calB_2 + V)})_{t \geq 0}$ consists of positive operators.
\end{proof}

\begin{theorem} 
	\label{theorem:convergence-quasipositive-potential}
	Suppose that $V(x)$ is in $\bbR^{N \times N}$ and quasi-positive for almost every $x \in \Omega$ 
	and that the solutions to the coupled parabolic equation~\eqref{eq:coupled-heat-equation-on-bounded-domain-general} are uniformly bounded. 
	Then the solutions to~\eqref{eq:coupled-heat-equation-on-bounded-domain-general} 
	converge uniformly as $t \to \infty$. 
\end{theorem}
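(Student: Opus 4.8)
The plan is to run the whole argument on the Hilbert space $L^2 \coloneqq L^2(\Omega;\bbC^N)$ and then transfer the conclusion to the $L^p$-scale. By Corollary~\ref{cor:ultracontractive-convergent} it suffices to show that the semigroup $(T(t))_{t\ge 0} \coloneqq (\ue^{t(\calB_2+V)})_{t\ge 0}$ converges in operator norm on $L^2$ as $t\to\infty$. Note that $(T(t))$ is positive by Proposition~\ref{prop:quasipositive-potential}; it is bounded by the standing hypothesis together with Corollary~\ref{cor:ultracontractive-bounded}; and the operators $T(t)$ are compact for every $t>0$ by Proposition~\ref{prop:ultracontractive}(ii), so $(T(t))$ is immediately norm continuous. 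All the analytic work is thus already done, and what remains is a purely spectral-theoretic statement about the generator $A\coloneqq \calB_2+V$.

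First I would record the spectral consequences of these three properties. Immediate norm continuity gives the spectral mapping theorem, $\omega_0(A)=s(A)$, and finiteness of $\sigma(A)\cap\{\re\lambda\ge -\varepsilon\}$ for every $\varepsilon>0$; if $\sigma(A)=\emptyset$, then $\omega_0(A)=-\infty$, so $T(t)\to 0$ in operator norm and we are done, so assume $\sigma(A)\ne\emptyset$. Boundedness of $(T(t))$ forces $s(A)\le 0$ and, whenever $\sigma(A)\cap i\bbR\ne\emptyset$, it forces $s(A)=0$ and every point of $\sigma(A)\cap i\bbR$ to be a first-order pole of the resolvent (a higher-order pole would cause polynomial growth of $\lVert T(t)\rVert$).

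The key step — and the only place where quasi-positivity of $V$, i.e.\ positivity of $(T(t))$, enters — is to show that $\sigma(A)\cap i\bbR\subseteq\{0\}$. Here I would invoke the cyclicity of the boundary (point) spectrum of a positive $C_0$-semigroup: if $i\beta\in\sigma(A)$, then $s(A)+\ui k\beta=\ui k\beta\in\sigma(A)$ for every $k\in\bbZ$. If $\beta\ne 0$ this produces infinitely many spectral values on the imaginary axis, contradicting the finiteness established above; hence $\beta=0$. (No irreducibility is needed, and since the semigroup is eventually compact the boundary spectrum coincides with the boundary point spectrum, so the cyclicity result applies as stated.) I expect this to be the main obstacle of the proof — not because it is technically hard, but because it is the one nontrivial ingredient; the intro example with the rotation potential shows that it genuinely fails without quasi-positivity.

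Finally, $\sigma(A)\cap i\bbR$ is either empty or $\{0\}$, and in the latter case $0$ is a first-order pole. In both cases it is a spectral set; let $P$ be the associated spectral projection (with $P\coloneqq 0$ if $\sigma(A)\cap i\bbR=\emptyset$). Then $L^2=PL^2\oplus(I-P)L^2$ splits into $(T(t))$-invariant subspaces with $PL^2=\ker A$ finite-dimensional and $T(t)P=P$ for all $t$ (since $AP=0$), while $\sigma\bigl(A|_{(I-P)L^2}\bigr)=\sigma(A)\setminus\{0\}$ lies in some half-plane $\{\re\lambda\le -\delta\}$, $\delta>0$, by the finiteness near $i\bbR$; as the restricted semigroup is still immediately norm continuous, $\lVert T(t)(I-P)\rVert\to 0$ exponentially. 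Adding the two parts gives $T(t)=P+T(t)(I-P)\to P$ in operator norm on $L^2$, which by Corollary~\ref{cor:ultracontractive-convergent} is exactly uniform convergence in the sense of Definition~\ref{def:uniform-boundedness-convergence-for-coupled-heat-equation}(b). Everything except the cyclicity step is routine for immediately compact, power-bounded semigroups and does not use positivity.
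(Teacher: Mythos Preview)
Your proof is correct and follows essentially the same route as the paper's: positivity plus immediate compactness force the boundary spectrum to be trivial, and then boundedness gives a first-order pole and hence operator norm convergence. The only difference is presentational: where the paper cites \cite[Corollary~C-III.2.12]{Arendt1986} for the boundary spectrum statement and \cite[Proposition~V.4.3]{Engel2006} for the convergence, you spell out the underlying cyclicity-plus-finiteness argument and the spectral projection decomposition explicitly.
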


\begin{proof}
	According to Proposition~\ref{prop:ultracontractive}(ii), 
	the semigroup $(\ue^{t(\calB_2 + V)})_{t \geq 0}$ is immediately compact. 
	Hence, the positivity of $(\ue^{t(\calB_2 + V)})_{t \geq 0}$ implies 
	that the so-called \emph{boundary spectrum} 
	\begin{align*}
		\spec(\calB_2 + V) \cap (\mathrm s(\calB_2 + V) + \ui \bbR)
	\end{align*}
	of $\calB_2 + V$ contains only the spectral bound $\mathrm s(\calB_2 + V)$ of $\calB_2 + V$ 
	(cf.\ \cite[Corollary~C-III.2.12]{Arendt1986}). 
	
	If $\mathrm s(\calB_2 + V) < 0$, then the immediate compactness of the semigroup implies 
	that it converges uniformly to $0$ 
	(as immediately compact semigroups satisfy the spectral mapping theorem, 
	see e.g.\ \cite[Corollary~IV.3.12]{Engel2000}).
	So assume now that $\mathrm s(\calB_2 + V) = 0$. 
	Then, according to what we just showed, $0$ is the only spectral value of $\calB_2 + V$ 
	on the imaginary axis; 
	it is a pole of the resolvent (due to the immediate compactness of the semigroup) 
	and this pole is of order $1$ since the semigroup is bounded.  
	This implies the operator norm convergence of the as $t \to \infty$, 
	see e.g.\ \cite[Proposition~V.4.3]{Engel2006}.
	So, the solutions to~\eqref{eq:coupled-heat-equation-on-bounded-domain-general} converge uniformly as $t \to \infty$.
\end{proof}

A related result for coupled parabolic equations on the whole space $\bbR^d$ was proved in \cite[Theorem~4.4]{Addona2019}. There, the matrices in the potential are supposed to be quasi-positive, but the assumptions on the differential operator and its coefficients are distinct from those in the present article.

Next, a sufficient condition for uniform boundedness of the solutions to equation~\eqref{eq:coupled-heat-equation-on-bounded-domain-general} is presented. This condition guarantees that Theorem~\ref{theorem:convergence-quasipositive-potential} can be applied.

\begin{proposition} 
	\label{prop:boundedness-quasipositive-potential}
	Suppose that $V(x)$ is in $\bbR^{N \times N}$ and is quasi-positive for almost every $x \in \Omega$ 
	and that there exists a vector $z \in \bbR^N$ whose components are all strictly greater than $0$ and 
	which satisfies $V(x)z = 0$ for almost all $x \in \Omega$.
	Then the solutions to equation~\eqref{eq:coupled-heat-equation-on-bounded-domain-general} 
	are uniformly bounded and thus converge uniformly as $t \to \infty$.  
\end{proposition}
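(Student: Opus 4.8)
The plan is to prove uniform boundedness of the solutions; uniform convergence then follows at once from Theorem~\ref{theorem:convergence-quasipositive-potential}. By Corollary~\ref{cor:ultracontractive-bounded} it suffices to bound the semigroup $(\ue^{t(\calB_p + V)})_{t \ge 0}$ for a single value of $p$, and here the case $p = \infty$ is the most convenient. Write $w \in L^\infty(\Omega;\bbR^N)$ for the constant function with value $z$, and put $\alpha \coloneqq \min_{1 \le k \le N} z_k > 0$ and $\beta \coloneqq \max_{1 \le k \le N} z_k$.

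The first step is to verify that $w$ is a fixed point of the solution semigroup. Since $\Omega$ is bounded, $w$ lies in the form domain $H^1(\Omega;\bbC^N)$ and $\nabla w = 0$. Recall from the proof of Proposition~\ref{prop:ultracontractive} that $-(\calB_2 + V)$ is associated with the form $c(u,v) = b(u,v) - \int_\Omega \langle Vu, v\rangle \dx x$. For $u = w$ we get $b(w,v) = 0$ because $\nabla w = 0$, and $\int_\Omega \langle V(x)w(x), v(x)\rangle \dx x = 0$ because $V(x)w(x) = V(x)z = 0$ for almost every $x$; hence $c(w, \argument) \equiv 0$, so $w \in D(\calB_2 + V)$ with $(\calB_2 + V)w = 0$, and therefore $\ue^{t(\calB_2 + V)}w = w$ for all $t \ge 0$. (Alternatively, one can argue via the Trotter product representation of $\ue^{t(\calB_p+V)}$ used in Section~\ref{subsection:behaviour-on-the-lp-scale}, using that each $\ue^{t\calA_k}$ fixes $\one$ and that $\ue^{tV(x)}z = z$ a.e.) Since $\ue^{t(\calB_\infty + V)}$ is, by definition, the restriction of $\ue^{t(\calB_2 + V)}$ to $L^\infty(\Omega;\bbC^N)$, the same identity holds for the $L^\infty$-semigroup.

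The second step uses positivity. By Proposition~\ref{prop:quasipositive-potential}, together with the fact that the $L^\infty$-semigroup is a restriction of $(\ue^{t(\calB_2 + V)})_{t \ge 0}$, every operator $\ue^{t(\calB_\infty + V)}$ is positive on $L^\infty(\Omega;\bbC^N)$. Let $f \in L^\infty(\Omega;\bbR^N)$ be real-valued with $\norm{f}_\infty \le 1$. Since $z_k \ge \alpha$ for every $k$, we have the pointwise estimate $-\alpha^{-1} w \le f \le \alpha^{-1} w$, and applying the positive operator $\ue^{t(\calB_\infty + V)}$ and using the fixed-point identity from the first step gives $-\alpha^{-1} w \le \ue^{t(\calB_\infty + V)}f \le \alpha^{-1} w$, whence $\norm{\ue^{t(\calB_\infty + V)}f}_\infty \le \alpha^{-1}\beta$. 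For general complex-valued $f$ with $\norm{f}_\infty \le 1$, splitting $f = \re f + \ui\,\im f$ (and noting that the semigroup preserves real-valuedness because $V$ and the $A_k$ are real-valued) yields $\norm{\ue^{t(\calB_\infty + V)}f}_\infty \le 2\alpha^{-1}\beta$; alternatively, using $\modulus{\zeta} = \sup_{\theta \in [0,2\pi]} \re(\ue^{\ui\theta}\zeta)$ and that $\ue^{t(\calB_\infty + V)}$ commutes with multiplication by complex scalars, one gets the sharper bound $\alpha^{-1}\beta$. Either way $\sup_{t \ge 0} \norm{\ue^{t(\calB_\infty + V)}}_{\infty \to \infty} < \infty$, so the solutions are uniformly bounded, and Theorem~\ref{theorem:convergence-quasipositive-potential} finishes the proof.

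I do not expect a genuine obstacle here; the argument is short. The two points that need a little care are the identification of $w$ as an honest fixed point — which is why I would argue through the form associated with $\calB_2 + V$ rather than through the operator ``$\calB_\infty$'', which is not defined — and the transfer of positivity and of the fixed-point identity from $L^2$ to $L^\infty$, both of which are immediate from the consistency of the semigroups on the $L^p$-scale established in Subsection~\ref{subsection:behaviour-on-the-lp-scale}. The conceptual heart of the matter is simply that a strictly positive null vector $z$ of the potential produces an order-bounded invariant box $[-\lambda w, \lambda w]$, $\lambda > 0$, which traps, uniformly in time, every bounded set of initial values.
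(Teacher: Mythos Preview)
Your proof is correct and follows essentially the same approach as the paper: both show that the constant function $z \otimes \one$ is a fixed point of the semigroup, then use positivity (Proposition~\ref{prop:quasipositive-potential}) to trap real $L^\infty$-functions between multiples of this fixed point, extend to complex functions, and conclude via Theorem~\ref{theorem:convergence-quasipositive-potential}. The only cosmetic difference is that you compute an explicit operator-norm bound $\alpha^{-1}\beta$, whereas the paper argues that each orbit is bounded and then invokes the uniform boundedness principle.
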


\begin{proof}
	The vector $z \, \otimes \one \coloneqq (z_1\one, \ldots, z_N\one)^T \in L^2(\Omega;\bbC^N)$ is in the kernel of $\calB_2 + V$ and thus a fixed vector of the semigroup $(\ue^{t(\calB_2 + V)})_{t \ge 0}$ on $L^2(\Omega;\bbC^N)$.
	
	Moreover, each vector in the real space $L^\infty(\Omega;\bbR^N)$ is bounded above and below by a multiple of $z \otimes \one$ and thus, by the positivity of the semigroup $(\ue^{t(\calB_2 + V)})_{t \ge 0}$, the orbit of the vector is bounded in $L^\infty(\Omega;\bbR^N)$.
	Consequently, the same is true for every vector in $L^\infty(\Omega;\bbC^N)$. Hence, the semigroup $(\ue^{t(\calB_\infty + V)})_{t \ge 0}$ on $L^\infty(\Omega;\bbC^N)$ is bounded by the uniform boundedness principle. Thus, Theorem~\ref{theorem:convergence-quasipositive-potential} yields the claim.
\end{proof}

Recall that a positive $C_0$-semigroup on an $L^p$-space is called \emph{irreducible} 
if the only invariant closed ideals are $\{0\}$ and the entire space 
(see for instance \cite[Section~C-III-3]{Arendt1986} or \cite[Section~14.3]{Batkai2017} for details).
\emph{Irreducibility} for a positive linear operator is defined analogously.

\begin{remark}
	\label{rem:irreducible}
	Let the assumptions of Theorem~\ref{theorem:convergence-quasipositive-potential} 
	or Proposition~\ref{prop:boundedness-quasipositive-potential} be satisfied.
	Define a matrix $W \in \bbR^{N \times N}$ as follows: 
	for all indices $j,k \in \{1, \ldots, N\}$ we set $W_{jk} \coloneqq 0$ if the component $V_{jk} \in L^\infty(\Omega;\bbR)$ of $V$
	is $0$ almost everywhere and $W_{jk} \coloneqq 1$ otherwise. 
	
	If the matrix $W$ is irreducible, then the limit projection $P$ on $L^p(\Omega;\bbC^N)$ of the semigroup $(\ue^{t(\calB_p + V)})_{t \geq 0}$ is either $0$ or has rank $1$.
\end{remark}

\begin{proof}
	Let us first consider the case $p \in [1,\infty)$ (since we have a $C_0$-semigroup in this case).
	Since $\Omega$ is connected, the heat semigroup generated by $\calA_k$ on $L^p(\Omega;\bbC)$ is irreducible. 
	As the matrix $W$ is also irreducible, we can hence apply the perturbation result in \cite[Proposition~C-III-3.3]{Arendt1986} 
	to see that the semigroup generated by $\calB_2 + V$ on $L^p(\Omega;\bbC^N)$ is also irreducible; 
	this argument is taken from \cite[Proposition~in~Section~8]{Nagel1989}.
	The irreducibility of the semigroup implies that the limit operator 
	is either $0$ or has rank $1$; 
	this follows from classical arguments in Perron--Frobenius theory, see for instance \cite[Proposition~3.1(c)]{Arendt2020} 
	for a detailed explanation.
	
	Since the semigroups generated by $\calB_p + V$ act consistently on the $L^p$-scale, 
	the same is true for the limit operator. 
	Hence, the conclusion carries over to the case $p=\infty$.
\end{proof}

Due to the irreducibility in the preceding remark, even a bit more can actually be said about the limit operator $P$. 
We refrain from discussing this in detail here and refer to the general result explained in \cite[Proposition~3.1(c)]{Arendt2020} instead.
We conclude this subsection with a simple example.

\begin{example} \label{ex:toy-quasi-positive}
	Let $N = 2$, 
	let $a, b \colon \Omega \to [0, \infty)$ measurable and bounded, 
	and let the potential $V$ be given by
	\begin{align*}
		V(x) = 
		a(x)
		\begin{pmatrix}
			-1 &  \phantom{-}2 \\
			\phantom{-}2 & -4
		\end{pmatrix}
		+
		b(x)
		\begin{pmatrix}
			-1 & \phantom{-}2 \\
			\phantom{-}1 & -2
		\end{pmatrix} 
		\qquad \text{for all } x \in \Omega.
	\end{align*}
	Each matrix $V(x)$ is quasi-positive and the vector $z \coloneqq (2,1)^{\operatorname{T}}$ is in the kernel of each $V(x)$.
	Thus, it follows from Proposition~\ref{prop:boundedness-quasipositive-potential} that the solutions to the evolution equation~\eqref{eq:toy-example} converge uniformly as $t \to \infty$. Since the function $u_0 \coloneqq \one \cdot z$ is an equilibrium, the limit is non-zero for some initial values. 
	Remark~\ref{rem:irreducible} shows that if at least one of the functions $a$ and $b$ is non-zero on a set of strictly positive measure, then the limit operator has rank $1$.
	
	One interesting aspect of the potential matrices $V(x)$ in this example is that they are, in general, not simultaneously diagonalizable (and hence, the results from Subsection~\ref{subsection:decoupling} below cannot be applied); this follows from the fact that the respective second eigenspaces (the ones not spanned by $z$) of the matrices
	\begin{align*}
		\begin{pmatrix}
			-1 & \phantom{-}2 \\
			\phantom{-}2 & -4
		\end{pmatrix}
		\quad \text{and} \quad
		\begin{pmatrix}
			-1 & \phantom{-} 2 \\
			\phantom{-}1 & -2
		\end{pmatrix}
	\end{align*}
	are distinct.
\end{example}

\subsection{Convergence for $\ell^2$-dissipative potentials} \label{subsection:convergence-for-2-dissipative-potentials}

In this subsection, convergence of the solutions to the coupled parabolic equation~\eqref{eq:coupled-heat-equation-on-bounded-domain-general} is characterized for the case where the matrices $V(x)$ are $\ell^2$-dissipative.

\begin{proposition} 
	\label{prop:eigenvalues-of-coupled-system}
	Suppose that, for almost all $x \in \Omega$, the matrix $V(x)$ is dissipative with respect to the $\ell^2$-norm on $\bbC^N$. 
	For each $\ui\beta \in \ui\bbR$ the following two assertions are equivalent:
	\begin{enumerate}[\upshape (i)]
		\item 
		The number $\ui \beta$ is in the point spectrum $\sigma_{\pnt}(\calB_2 + V)$.
		
		\item 
		There exists a measurable subset $\widetilde{\Omega} \subseteq \Omega$ of full measure 
		(i.e., the difference $\Omega \setminus \widetilde{\Omega}$ has Lebesgue measure $0$) such that
		\begin{align*}
			\bigcap_{x \in \widetilde{\Omega}} \ker(\ui \beta - V(x)) \neq \{0\}. 
		\end{align*}
	\end{enumerate}
	In this case, each component function of every eigenvector $u \in \ker\big(\ui\beta - (\calB_2 + V)\big)$ is constant almost everywhere on $\Omega$.
\end{proposition}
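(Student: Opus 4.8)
The plan is to use the $\ell^2$-dissipativity of the potential as an energy identity that forces any eigenvector on the imaginary axis to be spatially constant, and then to translate the eigenvalue equation into a pointwise linear-algebra condition on the matrices $V(x)$.

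\emph{Step 1 (the easy implication).} For (ii) $\Rightarrow$ (i) I would take a nonzero vector $c$ in the intersection $\bigcap_{x\in\widetilde\Omega}\ker(\ui\beta - V(x))$ and consider the function $u := c\otimes\one := (c_1\one,\dots,c_N\one)$. Since $\one$ is a fixed point of each semigroup $(\ue^{t\calA_k})_{t\ge0}$, it lies in $D(\calA_k)$ with $\calA_k\one = 0$, so $u \in D(\calB_2)$ with $\calB_2 u = 0$; moreover $Vu = \ui\beta u$ because $V(x)c = \ui\beta c$ for almost every $x$. Hence $(\calB_2+V)u = \ui\beta u$ with $u \neq 0$, i.e. $\ui\beta \in \sigma_{\pnt}(\calB_2+V)$.

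\emph{Step 2 (the substantive implication, which also yields the final claim).} For (i) $\Rightarrow$ (ii) let $u \in \ker\bigl(\ui\beta - (\calB_2+V)\bigr)$ be nonzero. Since $V$ is bounded, $D(\calB_2+V) = D(\calB_2)$, which is contained in the form domain $H^1(\Omega;\bbC^N)$, so I can pair the identity $(\calB_2+V)u = \ui\beta u$ with $u$ in $L^2(\Omega;\bbC^N)$ and take real parts. Using that $-\calB_2$ is associated with $b(u,v)=\sum_k a_k(u_k,v_k)$ one gets $\re\langle\calB_2 u,u\rangle = -\re b(u,u)$, and the uniform coercivity of the $A_k$ (applied pointwise to $\xi=\nabla u_k(x)$ and integrated over $\Omega$) gives $\re b(u,u) \ge \nu\sum_k\norm{\nabla u_k}_{L^2(\Omega)}^2 \ge 0$; on the other hand, the $\ell^2$-dissipativity of the matrices $V(x)$ yields $\re\langle Vu,u\rangle = \int_\Omega \re\langle V(x)u(x),u(x)\rangle\dx x \le 0$. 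Since $\re(\ui\beta)=0$, the sum of these two nonpositive quantities vanishes, so $\re b(u,u)=0$ and hence $\nabla u_k = 0$ for every $k$; as $\Omega$ is a domain, in particular connected, each $u_k$ equals an almost-everywhere constant $c_k$, which is exactly the final assertion of the proposition. Writing $u = c\otimes\one$ with $c=(c_1,\dots,c_N)\neq 0$ and using $\calB_2 u = 0$ as in Step 1, the eigenvalue equation collapses to $V(x)c = \ui\beta c$ for almost every $x$; letting $\widetilde\Omega$ be a measurable set of full measure on which this holds gives $0 \neq c \in \bigcap_{x\in\widetilde\Omega}\ker(\ui\beta - V(x))$, which is (ii).

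\emph{Expected main obstacle.} None of the steps is genuinely hard; the points that demand a little care are the passage from $\nabla u_k = 0$ to "$u_k$ almost everywhere constant", which relies on connectedness of $\Omega$, and the measure-theoretic bookkeeping needed to organize the various almost-everywhere statements into a single null set so as to produce the set $\widetilde\Omega$ in (ii). It is also worth double-checking the sign convention linking $\calB_2$ to the form $b$, so that the energy term $-\re b(u,u)$ indeed enters with the correct (nonpositive) sign in the identity above.
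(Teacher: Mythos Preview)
Your proposal is correct and follows essentially the same route as the paper: both directions are argued exactly as you outline, with the energy identity $0 = \re\langle \calB_2 u,u\rangle + \re\langle Vu,u\rangle$ split into two nonpositive summands via coercivity of the forms $a_k$ and $\ell^2$-dissipativity of the $V(x)$, forcing $\nabla u_k = 0$ and hence constancy by connectedness. The only cosmetic difference is that the paper writes the constant vector as $z$ rather than $c\otimes\one$, but the logic is identical.
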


\begin{proof}
	(ii) $\Rightarrow$ (i): 
	Let $0 \neq z \in \bigcap_{x \in \widetilde{\Omega}} \ker(\ui \beta - V(x)) \subseteq \bbC^N$ 
	and consider the constant function $u \colon \Omega \to \bbR^N$ given by $u(x) = z$ for almost all $x \in \Omega$. 
	Then $u \in D(\calB_2 + V)$ and $(\calB_2 + V) u = \ui \beta u$, 
	and thus $\ui \beta \in  \sigma_{\pnt}(\calB_2 + V)$.
	
	(i) $\Rightarrow$ (ii): 
	Let $u \in D(\calB_2)$ be an eigenvector of $\calB_2 + V$ to the eigenvalue $\ui \beta$.
 	Then $\langle (\calB_2 + V) u, u \rangle = \ui \beta \norm{u}_2^2$, 
 	so
 	\begin{align*}
 		0 
 		= 
 		\re \langle (\calB_2 + V) u, u \rangle 
 		= 
 		\re \langle \calB_2 u, u \rangle 
 		+ 
 		\re \langle Vu, u \rangle
 		.
 	\end{align*}
	Since both operators $\calB_2$ and $V$ are dissipative, 
	it follows that $\re \langle Vu, u \rangle = 0$ and $\re \langle \calB_2 u, u \rangle = 0$.
	The latter of these equalities together with the definition of $\calB_2$ gives
	\begin{align*}
		0 
		= 
		\sum_{k=1}^N \re \langle \calA_k u_k, u_k \rangle  
		= 
		- \sum_{k=1}^N \re a_k(u_k, u_k)
	\end{align*}
	where we used the operators $\calA_k$ and the forms $a_k$ as defined in Subsection~\ref{subsection:the-equation}.
	For every $k \in \{1, \ldots, N\}$ it follows from the coercivity estimate for the matrix valued functions 
	$A_k: \Omega \to \bbR^{d \times d}$ (see Subsection~\ref{subsection:the-equation}) that 
	\begin{align*}
		- \re a_k(u_k, u_k) 
		= 
		- \int_\Omega \nabla u_k^T A_k \nabla \overline{u_k} \dx x 
		\le  
		- \nu \int_\Omega \norm{\nabla u_k}_2^2 \dx x 
		\le 
		0
		.
	\end{align*}
	Consequently, $\re a_k(u_k, u_k) = 0$ for each $k \in \{1, \ldots, N\}$, and we thus have 
	\begin{align*}
		0 = \re a_k(u_k, u_k) \ge \nu \int_\Omega \norm{\nabla u_k}_2^2 \dx x.
	\end{align*}
	Therefore, $\nabla u_k = 0$ for each $k \in \{1, \ldots, N\}$ 
	and the connectedness of $\Omega$ implies that each of the functions $u_k$ is constant almost everywhere. 
	Hence, there is a non-zero vector $z \in \bbC^N$ such that $u(x) = z$ 
	for almost every $x \in \Omega$. 
	Thus,
	\begin{align*}
		V(x)z = V(x)u(x) = \ui \beta u(x) = \ui\beta z
	\end{align*}
	for almost every $x \in \Omega$. 
	Consequently, there exists a measurable set $\widetilde \Omega \subseteq \Omega$ of full measure 
	such that $z \in \bigcap_{x \in \widetilde{\Omega}} \ker(\ui \beta - V(x))$.
\end{proof}

The previous proposition characterizes, in terms of the matrices $V(x)$, whether $\calB_2 + V$ has a non-zero imaginary eigenvalue. This yields the following characterization of uniform convergence for the solutions to the coupled heat equation~\eqref{eq:coupled-heat-equation-on-bounded-domain-general}.

\begin{theorem} \label{thm:convergence-for-coupled-heat-equation-l2-case}
	Assume that, for almost all $x \in \Omega$, the matrix $V(x)$ is dissipative with respect to the $\ell^2$-norm on $\bbC^N$. 
	Then the following assertions are equivalent:
	\begin{enumerate}[\upshape (i)]
		\item 
		The solutions to the coupled heat equation~\eqref{eq:coupled-heat-equation-on-bounded-domain-general} 
		converge uniformly as $t \to \infty$.
		
		\item 
		For every $\ui\beta \in \ui\bbR \setminus \{0\}$ 
		and every measurable subset $\widetilde{\Omega} \subseteq \Omega$ of full measure we have
		\begin{align*}
			\bigcap_{x \in \widetilde{\Omega}} \ker(\ui \beta - V(x)) = \{0\}. 
		\end{align*}
	\end{enumerate}
\end{theorem}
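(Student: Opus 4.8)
The plan is to translate both assertions into a statement about the imaginary part of the spectrum of $\calB_2+V$ and then invoke Proposition~\ref{prop:eigenvalues-of-coupled-system}. First I would collect the relevant structural properties of the semigroup. By Proposition~\ref{prop:p-dissipative} the hypothesis that every $V(x)$ is $\ell^2$-dissipative on $\bbC^N$ means precisely that the multiplication operator $V$ is dissipative on $L^2(\Omega;\bbC^N)$, so by Proposition~\ref{prop:dissipative-potential-implies-contractive-semigroup} the semigroup $(\ue^{t(\calB_2+V)})_{t\ge0}$ is contractive, in particular bounded. By Proposition~\ref{prop:ultracontractive}(ii) it is immediately compact, hence $\calB_2+V$ has compact resolvent; thus $\spec(\calB_2+V)=\sigma_{\pnt}(\calB_2+V)$ consists of isolated eigenvalues of finite algebraic multiplicity, the semigroup is eventually norm-continuous, and the spectral mapping theorem holds. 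Finally, by Corollary~\ref{cor:ultracontractive-convergent}, assertion~(i) is equivalent to operator-norm convergence of $\ue^{t(\calB_2+V)}$ on $L^2(\Omega;\bbC^N)$ as $t\to\infty$.

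Next I would establish the general principle that a bounded, immediately compact $C_0$-semigroup $(T(t))_{t\ge0}$ with generator $G$ converges in operator norm as $t\to\infty$ if and only if $\spec(G)\cap\ui\bbR\subseteq\{0\}$; this is essentially the mechanism already used in the proof of Theorem~\ref{theorem:convergence-quasipositive-potential}, only without the positivity input. For ``$\Leftarrow$'': boundedness forces $\spec(G)\subseteq\{\re\lambda\le0\}$, and eventual norm-continuity together with discreteness of $\spec(G)$ make $\{\lambda\in\spec(G):\re\lambda\ge-\varepsilon\}$ finite for small $\varepsilon>0$; hence either $\sup\{\re\lambda:\lambda\in\spec(G)\}<0$, in which case $T(t)\to0$ in operator norm, or $\mathrm s(G)=0$ and $0$ is an isolated pole of the resolvent, of order $1$ because the semigroup is bounded, so that splitting off the finite-rank spectral projection $P$ at $0$ gives $T(t)\to P$ in operator norm (see \cite[Proposition~V.4.3]{Engel2006}). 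For ``$\Rightarrow$'': if $\ui\beta\in\spec(G)$ with $\beta\neq0$, then $\ui\beta$ is an eigenvalue (compact resolvent), so $T(t)u=\ue^{\ui\beta t}u$ for some $u\neq0$ and all $t\ge0$; since $t\mapsto\ue^{\ui\beta t}$ has no limit as $t\to\infty$, $T(t)u$ has no limit, contradicting operator-norm (hence strong) convergence.

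Applying this principle to $\calB_2+V$, assertion~(i) is equivalent to $\ui\beta\notin\spec(\calB_2+V)=\sigma_{\pnt}(\calB_2+V)$ for every $\beta\in\bbR\setminus\{0\}$. By Proposition~\ref{prop:eigenvalues-of-coupled-system}, the condition $\ui\beta\notin\sigma_{\pnt}(\calB_2+V)$ says exactly that there is no measurable full-measure $\widetilde\Omega\subseteq\Omega$ with $\bigcap_{x\in\widetilde\Omega}\ker(\ui\beta-V(x))\neq\{0\}$, i.e.\ that $\bigcap_{x\in\widetilde\Omega}\ker(\ui\beta-V(x))=\{0\}$ for every such $\widetilde\Omega$. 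Quantifying over all $\beta\neq0$ turns this into assertion~(ii), finishing the equivalence. The only step demanding real care is the general semigroup fact, and within it the case $\mathrm s(\calB_2+V)=0$, where one needs boundedness to ensure the pole at $0$ is of order $1$ and hence that convergence holds in operator norm and not merely strongly; everything else is a routine dictionary between spectral conditions and the kernel condition in~(ii) via Proposition~\ref{prop:eigenvalues-of-coupled-system}.
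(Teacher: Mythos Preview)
Your proposal is correct and follows essentially the same route as the paper: reduce~(ii) via Proposition~\ref{prop:eigenvalues-of-coupled-system} to the absence of non-zero purely imaginary eigenvalues of $\calB_2+V$, and then use the contractivity and immediate compactness of the semigroup together with the spectral argument from the proof of Theorem~\ref{theorem:convergence-quasipositive-potential} to obtain the equivalence with operator-norm convergence. The only difference is presentational---you spell out the general semigroup principle and the reduction to $L^2$ via Corollary~\ref{cor:ultracontractive-convergent} more explicitly than the paper does.
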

\begin{proof}
	According to Proposition~\ref{prop:eigenvalues-of-coupled-system}, assertion~(ii) of the theorem is equivalent to the assertion that $\calB_2+V$ does not have any non-zero eigenvalues on the imaginary axis. So, it is left to show that this is equivalent to uniform convergence of the solutions to~\eqref{eq:coupled-heat-equation-on-bounded-domain-general}:
	
	(i) $\Rightarrow$ (ii): If (ii) does not hold, then $\calB_2 + V$ has an eigenvalue $\ui\beta \in \ui\bbR \setminus \{0\}$ with eigenvector $u$. Thus, $\ue^{t(\calB_2 + V)}u$ does not converge as $t \to \infty$, which means that (i) does not hold. 
	
	(ii) $\Rightarrow$ (i): Conversely, suppose $\calB_2 + V$ does not have any eigenvalues on the imaginary axis, except for possibly $0$.

	According to Proposition~\ref{prop:ultracontractive}(ii), the semigroup $(\ue^{t(\calB_2 + V)})_{t \geq 0}$ is immediately compact. Moreover, it is contractive as $\calB_2+V$ is dissipative. 
	Thus, the same spectral theoretic argument is in the proof of Theorem~\ref{theorem:convergence-quasipositive-potential} implies that the solutions to~\eqref{eq:coupled-heat-equation-on-bounded-domain-general} converge uniformly as $t \to \infty$.
\end{proof}

Let us state the following special case of Theorem~\ref{thm:convergence-for-coupled-heat-equation-l2-case} explicitly.

\begin{corollary} \label{cor:convergence-for-coupled-heat-equation-l2-case}
	Suppose that, for almost all $x \in \Omega$, the matrices $A_1(x),\ldots,A_N(x)$ are symmetric and the matrix $V(x)$ is dissipative with respect to the $\ell^2$-norm on $\bbC^N$. If
	\begin{align*}
		\sigma(V(x)) \cap \ui\bbR \subseteq \{0\} 
		\qquad \text{for almost every } x \in \Omega,
	\end{align*}
	then the solutions to the coupled heat equation~\eqref{eq:coupled-heat-equation-on-bounded-domain-general} converge uniformly as $t \to \infty$.
\end{corollary}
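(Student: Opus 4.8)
The plan is to deduce the corollary directly from Theorem~\ref{thm:convergence-for-coupled-heat-equation-l2-case}. That theorem applies verbatim, since its only standing hypothesis is that $V(x)$ be dissipative with respect to the $\ell^2$-norm on $\bbC^N$ for almost every $x$, which is assumed here; in fact the symmetry of the matrices $A_k(x)$ plays no role in the argument below (it merely renders $\calB_2$ self-adjoint) and I would not use it. So it remains only to verify condition~(ii) of Theorem~\ref{thm:convergence-for-coupled-heat-equation-l2-case}, namely that $\bigcap_{x \in \widetilde\Omega} \ker(\ui\beta - V(x)) = \{0\}$ for every $\ui\beta \in \ui\bbR \setminus \{0\}$ and every measurable full-measure $\widetilde\Omega \subseteq \Omega$.

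First I would fix such a $\ui\beta$ (with $\beta \neq 0$) and such a $\widetilde\Omega$. By hypothesis there is a measurable set $\Omega_0 \subseteq \Omega$ of full measure with $\sigma(V(x)) \cap \ui\bbR \subseteq \{0\}$ for every $x \in \Omega_0$. Since $\beta \neq 0$, this forces $\ui\beta \notin \sigma(V(x))$ for every $x \in \Omega_0$, i.e.\ $\ui\beta - V(x)$ is invertible and hence $\ker(\ui\beta - V(x)) = \{0\}$ for each $x \in \Omega_0$. Now $\widetilde\Omega \cap \Omega_0$ still has full measure, so in particular it is non-empty; choosing any $x_0 \in \widetilde\Omega \cap \Omega_0$ gives
\[
	\bigcap_{x \in \widetilde\Omega} \ker(\ui\beta - V(x)) \subseteq \ker(\ui\beta - V(x_0)) = \{0\}.
\]
Thus condition~(ii) of Theorem~\ref{thm:convergence-for-coupled-heat-equation-l2-case} holds, and the theorem yields that the solutions to~\eqref{eq:coupled-heat-equation-on-bounded-domain-general} converge uniformly as $t \to \infty$, which is the claim.

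There is no serious obstacle here; the proof is essentially a one-line consequence of the preceding theorem once one unwinds the spectral hypothesis. The only mild subtlety to keep in mind is that the exceptional null set on which $\sigma(V(\argument)) \cap \ui\bbR$ could contain a non-zero point is allowed to depend on $\beta$ — but condition~(ii) of Theorem~\ref{thm:convergence-for-coupled-heat-equation-l2-case} is itself quantified over all $\ui\beta$, so running the argument separately for each $\beta$ is all that is needed. (Alternatively, one could bypass Theorem~\ref{thm:convergence-for-coupled-heat-equation-l2-case} and argue directly: Proposition~\ref{prop:eigenvalues-of-coupled-system} shows $\calB_2 + V$ has no non-zero eigenvalue on $\ui\bbR$, and then the immediate compactness (Proposition~\ref{prop:ultracontractive}(ii)) and contractivity of $(\ue^{t(\calB_2+V)})_{t \ge 0}$ together with the spectral argument from the proof of Theorem~\ref{theorem:convergence-quasipositive-potential} give uniform convergence; but invoking the already-established Theorem~\ref{thm:convergence-for-coupled-heat-equation-l2-case} is cleaner.)
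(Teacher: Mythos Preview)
Your proof is correct and matches the paper's approach: the paper simply introduces the corollary as a special case of Theorem~\ref{thm:convergence-for-coupled-heat-equation-l2-case} and gives no separate proof, so your verification of condition~(ii) is exactly the intended (and only) step. Your observation that the symmetry of the $A_k(x)$ is not used is also accurate --- in fact the paper's acknowledgements note that this hypothesis was needed in an earlier version of Proposition~\ref{prop:eigenvalues-of-coupled-system} but was subsequently removed, and the corollary's statement appears to be a remnant of that.
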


\begin{example} \label{ex:toy-2-dissipative}
	Let $N=2$, 
	let $a \colon \Omega \to \bbR \setminus \{0\}$ be bounded and measurable, 
	and let the potential $V$ be given by
	\begin{align*}
		V(x) = 
		\begin{pmatrix}
			0 & -a(x) \\
			a(x) & 0
		\end{pmatrix} 
		\qquad \text{for all } x \in \Omega.
	\end{align*}
	This potential is not quasi-positive but, since each matrix $V(x)$ is $\ell^2$-dissipative, 
	all solutions to the evolution equation~\eqref{eq:toy-example} 
	are bounded on $L^p(\Omega)$ for any $p \in [1, \infty]$ (see~Corollary~\ref{cor:ultracontractive-bounded}).
	
	The spectrum of each matrix $V(x)$ is $\{-\ui a(x), \ui a(x)\}$, 
	so it follows from Theorem~\ref{thm:convergence-for-coupled-heat-equation-l2-case} 
	that all solutions to \eqref{eq:toy-example} convergence uniformly as $t \to \infty$ if and only if $a$ is not constant almost everywhere. 
\end{example}

\subsection{Convergence for $\ell^p$-dissipative potentials} 
\label{subsection:convergence-for-p-dissipative-potentials}

A drawback of the techniques employed in the preceding section is that they rely heavily on the $2$-dissipativity of the matrices $V(x)$. 
In this section, we will instead assume that the matrices $V(x)$ 
are dissipative with respect to the $\ell^p$-norm for some fixed $p \in [1, \infty]$, $p \neq 2$, 
and have real entries only. 
As Proposition~\ref{prop:ell_p-dissipative-matrices} below shows, 
this assumption is stronger than assuming $\sigma(V(x)) \cap \ui \bbR \subseteq \{0\}$ for almost all $x \in \Omega$. 
As can be seen in the simple Example~\ref{ex:toy-p-dissipative}, 
there are cases where $p$-dissipativity of the $V(x)$ is satisfied for some $p \not= 2$ while $2$-dissipativity is not.

For our analysis, we need spectral theoretic results on a class of spaces that we call \emph{projectively non-Hilbert spaces}. 
This notion is taken from \cite[Definition~3.1]{Glueck2016b}: a real Banach space $E$ is called \emph{projectively non-Hilbert} if for no contractive rank-$2$ projection $P \in \calL(E)$ the range $PE$ is isometrically isomorph to a Hilbert space. This is, for instance, the case for each real-valued $L^p$-space, $p \in [1,\infty] \setminus \{2\}$ (cf.\ \cite[Example~3.2]{Glueck2016b} and the discussion after \cite[Example~3.5]{Glueck2016b};
in the finite-dimensional case, this was already observed in \cite[Propositions~1 and~2]{Lyubich1970}). 

The following proposition is a finite dimensional special case of \cite[Theorem~3.7]{Glueck2016b}.

\begin{proposition} 
	\label{prop:ell_p-dissipative-matrices}
	Let $p \in [1,\infty] \setminus \{2\}$. If a matrix in $\bbR^{N \times N}$ is dissipative with respect to the $\ell^p$-norm on $\bbR^N$, then its spectrum intersects the imaginary axis at most in $\{0\}$.
\end{proposition}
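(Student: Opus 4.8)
The statement unpacks to: if $M \in \bbR^{N\times N}$ is dissipative for $\norm{\argument}_p$ and $\ui\beta \in \sigma(M)$ for some $\beta \in \bbR$, then $\beta = 0$; I would argue by contradiction. Dissipativity says that $(\ue^{tM})_{t \ge 0}$ is a contraction semigroup on $\ell^p_N := (\bbR^N, \norm{\argument}_p)$, hence bounded, so every eigenvalue of $M$ on $\ui\bbR$ is semisimple. Assume $\ui\beta \in \sigma(M)$ with $\beta \ne 0$ and pick an eigenvector $v = a + \ui b$ (with $a,b \in \bbR^N$) to the eigenvalue $\ui\beta$. As $M$ is real and $\beta \ne 0$, the vectors $a,b$ are $\bbR$-linearly independent, so $F' := \lin_\bbR\{a,b\}$ is a two-dimensional $M$-invariant subspace and, in the basis $(a,b)$, the operator $\ue^{tM}|_{F'}$ is the rotation through the angle $\beta t$. (Everything below is the finite-dimensional case of \cite[Theorem~3.7]{Glueck2016b}, which one could also simply invoke.)

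Next I would isolate a \emph{contractive} projection onto the peripheral subspace. Let $F_0 \subseteq \bbR^N$ be the real form of the sum of all eigenspaces of $M$ belonging to eigenvalues on $\ui\bbR$, let $E_-$ be the sum of the generalized eigenspaces of the remaining eigenvalues (all of strictly negative real part, by boundedness), and let $P_0$ be the projection onto $F_0$ along $E_-$; note $F' \subseteq F_0$. The norm-closure $\cl\{\ue^{tM} : t \ge 0\}$ in $\calL(\bbR^N)$ is a compact semigroup: on $E_-$ the operators $\ue^{tM}$ tend to $0$ as $t \to \infty$, while on $F_0$ their restrictions stay inside a compact abelian group and therefore recur arbitrarily near the identity. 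Hence $\ue^{t_n M} \to P_0$ for suitable $t_n \to \infty$, and since each $\ue^{tM}$ is a contraction of $\ell^p_N$, so is $P_0$. By the same argument every element of the compact group $\cl\{\ue^{tM}|_{F_0} : t \ge 0\}$ is a contraction of $F_0$ (equipped with the norm induced from $\ell^p_N$) and, being group-invertible, an isometry; in particular the rotations $\ue^{tM}|_{F'}$, $t \in \bbR$, are isometries of $F_0$, so this normed space carries infinitely many linear isometries.

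The contradiction then comes from the geometry of $\ell^p$-spaces. The range of a contractive projection on $\ell^p_N$ is again an $\ell^p$-space, i.e.\ isometric to $\ell^p_k$ for some $k \le N$ (classification of contractive projections on $L^p$-spaces: Ando for $1 < p < \infty$, Douglas for $p = 1$, and the corresponding finite-dimensional fact for $p = \infty$); this is exactly the kind of $L^p$-rigidity underlying projective non-Hilbertness, cf.\ the discussion around \cite[Example~3.2]{Glueck2016b}. Thus $F_0$, with its induced norm, is isometrically isomorphic to $\ell^p_k$. But for $p \ne 2$ the linear isometry group of $\ell^p_k$ is finite -- it consists precisely of the signed coordinate permutations, by Lamperti's description of $L^p$-isometries (and elementarily for $p = \infty$) -- which is incompatible with the infinitely many isometries found above. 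Hence $\beta = 0$, i.e.\ $\sigma(M) \cap \ui\bbR \subseteq \{0\}$.

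I expect the genuinely delicate step to be the one in the second paragraph: upgrading a merely \emph{bounded} projection onto the peripheral subspace $F_0$ to a \emph{contractive} one, since this is what brings the $\ell^p$-geometry into play and it is the real content of \cite[Theorem~3.7]{Glueck2016b}. It rests on two standard facts about bounded operator semigroups in finite dimensions -- semisimplicity of the peripheral point spectrum, and compactness of the norm-closure of a contraction semigroup together with the fact that this closure contains the asymptotic projection $P_0$. The remaining pieces -- the rotational picture on $F'$, and the structure of the contractive projections and linear isometries of $\ell^p_k$ -- are routine.
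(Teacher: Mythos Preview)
Your proposal is correct and aligns with the paper's approach: the paper does not give an independent proof but simply records the proposition as a finite-dimensional special case of \cite[Theorem~3.7]{Glueck2016b}, which is exactly the result you invoke and then unpack in detail. Your sketch faithfully reproduces the mechanism behind that theorem --- obtaining a contractive projection onto the peripheral subspace via a compactness/recurrence argument, identifying its range as an $\ell^p_k$-space via the structure theory of contractive projections on $L^p$, and deriving a contradiction from the finiteness of the isometry group of $\ell^p_k$ for $p\neq 2$ --- so there is nothing to correct.
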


We point out that the main idea that underlies this proposition is much older and goes back to Lyubich \cite[Theorem~1]{Lyubich1970} (see also \cite[Section~2.4]{Belitskiui1988} and \cite[Corollary~3.9]{Lyubich1997}) who formulated a closely related result in the discrete-time case.

Note that the assertion of Proposition~\ref{prop:ell_p-dissipative-matrices} fails in the case $p = 2$, e.g.\ consider the matrix 
\begin{align*}
	V \coloneqq 
	\begin{pmatrix}
		0 & -1 \\
		1 & \phantom{-}0
	\end{pmatrix}.
\end{align*}
This matrix is dissipative with respect to the $\ell^2$-norm on $\bbR^2$, but $\sigma(V) = \{-\ui, \ui\}$. Moreover, we stress that it is essential in Proposition~\ref{prop:ell_p-dissipative-matrices} that the matrices $V(x)$ have only real entries (otherwise, the operator $\ui$ on the one-dimensional space $\bbC$ is a counterexample).

\begin{theorem} 
	\label{thm:convergence-for-coupled-heat-equation-lp-case}
	Let $p\in [1,\infty] \setminus \{2\}$ and suppose that, for almost all $x \in \Omega$, 
	the matrix $V(x)$ is in $\bbR^{N \times N}$ and is dissipative with respect to the $\ell^p$-norm on $\bbR^N$. 
	Then the solutions to the coupled heat equation~\eqref{eq:coupled-heat-equation-on-bounded-domain-general} 
	converge uniformly as $t \to \infty$.
\end{theorem}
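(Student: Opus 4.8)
The strategy is to reduce the claim to the operator-norm convergence criterion already set up in Corollary~\ref{cor:ultracontractive-convergent}, and to obtain that convergence from the spectral theory of the immediately compact, contractive semigroup $(\ue^{t(\calB_p+V)})_{t\ge 0}$. First I would observe that, since $V(x)$ is $\ell^p$-dissipative on $\bbR^N$ for almost every $x$, Proposition~\ref{prop:p-dissipative} (specifically the equivalence of (i) and (iii), using that the $V(x)$ are real) shows that the multiplication operator $V$ is dissipative on $L^p(\Omega;\bbC^N)$. Hence, by Proposition~\ref{prop:dissipative-potential-implies-contractive-semigroup}, the semigroup $(\ue^{t(\calB_p+V)})_{t\ge 0}$ is contractive on $L^p(\Omega;\bbC^N)$; in particular the solutions are uniformly bounded. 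By Proposition~\ref{prop:ultracontractive}(ii), the semigroup is immediately compact.

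\textbf{The spectral step.} With immediate compactness in hand, the semigroup satisfies the spectral mapping theorem, so its long-time behaviour is governed by the spectrum of the generator, which consists only of eigenvalues. By contractivity the spectral bound $\mathrm s(\calB_p+V)$ is $\le 0$. If $\mathrm s(\calB_p+V) < 0$ the semigroup converges uniformly to $0$ and we are done, so assume $\mathrm s(\calB_p+V) = 0$. The crux is to show that $0$ is the \emph{only} spectral value on the imaginary axis and that it is a pole of order $1$. For the latter, boundedness of the semigroup forces any pole on the imaginary axis to have order $1$ (as in the proof of Theorem~\ref{theorem:convergence-quasipositive-potential}). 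For the former, suppose $\ui\beta$ with $\beta \ne 0$ were an eigenvalue, with eigenvector $u \in D(\calB_p+V)$. Arguing exactly as in the proof of Proposition~\ref{prop:eigenvalues-of-coupled-system} — but now using dissipativity on $L^p$ rather than the $L^2$ inner product — one shows each component $u_k$ is constant a.e.\ on $\Omega$ (here the coercivity of the $A_k$ still forces $\nabla u_k = 0$; this part of the argument only needs that $\calB_p$ is dissipative, which holds on the diagonal). Thus $u = z\otimes\one$ for some nonzero $z \in \bbC^N$, and $V(x)z = \ui\beta z$ for almost every $x$, i.e.\ $\ui\beta \in \sigma(V(x))$ on a set of full measure. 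Since $V(x)$ is $\ell^p$-dissipative on $\bbR^N$ with $p \ne 2$, Proposition~\ref{prop:ell_p-dissipative-matrices} gives $\sigma(V(x)) \cap \ui\bbR \subseteq \{0\}$, contradicting $\beta \ne 0$. Hence no nonzero imaginary eigenvalue exists, and the standard Tauberian/spectral conclusion (e.g.\ \cite[Proposition~V.4.3]{Engel2006}) yields operator-norm convergence as $t \to \infty$.

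\textbf{Handling $p = \infty$ and the main obstacle.} For $p \in [1,\infty)$ the argument above runs directly on $L^p(\Omega;\bbC^N)$, and Corollary~\ref{cor:ultracontractive-convergent} then transfers the conclusion to every $q$, including $q = \infty$. For $p = \infty$ one cannot work with a generator $\calB_\infty$ directly; instead I would pick any finite $q$, note that $\ell^\infty$-dissipativity of the real matrices $V(x)$ does not obviously imply $\ell^q$-dissipativity, so the cleanest route is to run the whole argument on $L^2(\Omega;\bbC^N)$ — the semigroup $(\ue^{t(\calB_2+V)})_{t\ge0}$ is uniformly bounded (by Corollary~\ref{cor:ultracontractive-bounded}, since it is contractive on $L^\infty$) and immediately compact, and the eigenvector analysis of Proposition~\ref{prop:eigenvalues-of-coupled-system} applies verbatim on $L^2$, after which Proposition~\ref{prop:ell_p-dissipative-matrices} with the given $p=\infty$ again kills nonzero imaginary eigenvalues. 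I expect the main obstacle to be precisely this bookkeeping: making sure the dissipativity-based eigenvector argument is carried out on a space where it is clean (namely $L^2$, exploiting that boundedness and immediate compactness are $p$-independent by Corollaries~\ref{cor:ultracontractive-bounded} and~\ref{cor:ultracontractive-convergent}), while the hypothesis of $\ell^p$-dissipativity of the matrices is invoked only at the finite-dimensional level through Proposition~\ref{prop:ell_p-dissipative-matrices}. Once the imaginary point spectrum is pinned down to $\{0\}$ and the pole order to $1$, the convergence is immediate and then propagates across the whole $L^p$-scale by Corollary~\ref{cor:ultracontractive-convergent}.
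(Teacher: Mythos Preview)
Your strategy of reducing to ``no nonzero imaginary eigenvalues'' is sound in outline, but the eigenvector step contains a genuine gap, and the paper takes a different route precisely to avoid it.

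The proof of Proposition~\ref{prop:eigenvalues-of-coupled-system} rests on the splitting
\[
0=\re\langle \calB_2 u,u\rangle+\re\langle Vu,u\rangle,
\]
with \emph{both} summands nonpositive; the second inequality is exactly $\ell^2$-dissipativity of $V$. Under the hypotheses of Theorem~\ref{thm:convergence-for-coupled-heat-equation-lp-case} the matrices $V(x)$ are only $\ell^p$-dissipative for some $p\ne 2$, and Example~\ref{ex:toy-p-dissipative} shows explicitly that such $V$ need not be $\ell^2$-dissipative. Hence on $L^2$ the term $\re\langle Vu,u\rangle$ may be strictly positive, the splitting fails, and you cannot conclude $\re\langle\calB_2 u,u\rangle=0$ or $\nabla u_k=0$. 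This is exactly why your proposed workaround for $p=\infty$ (pass to $L^2$ and invoke Proposition~\ref{prop:eigenvalues-of-coupled-system} verbatim) does not go through: boundedness of the semigroup on $L^2$ is not enough --- the argument needs dissipativity of $V$ there.

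Running the argument directly on $L^p$ with a duality map $j$ is not ``exactly as in Proposition~\ref{prop:eigenvalues-of-coupled-system}'' either. Even granting the splitting $\re\langle\calB_p u,j(u)\rangle=0=\re\langle Vu,j(u)\rangle$, the implication from $\re\langle\calA_{k,p} u_k,j_p(u_k)\rangle=0$ to $u_k$ constant is an $L^p$-coercivity statement that you do not prove; the form-based argument in Proposition~\ref{prop:eigenvalues-of-coupled-system} is genuinely an $L^2$ computation, and for complex $u_k$ (and especially for $p\in\{1,\infty\}$, where the duality map is set-valued) the analogue requires separate justification.

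The paper bypasses the eigenvector analysis entirely. It applies \cite[Corollary~3.8]{Glueck2016b} (for $p\in[1,\infty)$) and \cite[Theorem~1.1]{Dobrick2021} (for $p=\infty$) directly: these results say that a real, contractive, eventually compact semigroup on a projectively non-Hilbert Banach space --- which $L^p(\Omega;\bbR^N)$ is for $p\ne 2$, cf.\ Remark~\ref{rem:advantage-of-choice-of-norm} --- converges in operator norm. Proposition~\ref{prop:ell_p-dissipative-matrices} is merely the finite-dimensional instance of the underlying geometric principle; the proof uses the infinite-dimensional theorem rather than reducing to the matrix statement via an eigenvector identification.
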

\begin{proof}
	First, consider the case $p \in [1, \infty)$. Then the strongly continuous semigroup $(\ue^{t(\calB_p + V)})_{t \geq 0}$ is contractive on $L^p(\Omega;\bbC^N)$ by Propositions~\ref{prop:dissipative-potential-implies-contractive-semigroup} and~\ref{prop:p-dissipative}, and clearly, it leaves $L^p(\Omega;\bbR^N)$ invariant. Furthermore, as the operators $\ue^{t(\calB_p + V)}$ are compact for $t \in (0,\infty)$, it follows from \cite[Corollary~3.8]{Glueck2016b} that $\ue^{t(\calB_p + V)}$ converges with respect to the operator norm as $t \to \infty$. So, the solutions to \eqref{eq:coupled-heat-equation-on-bounded-domain-general} converge uniformly as $t \to \infty$.
	
	Now suppose that $p = \infty$. Then $(\ue^{t(\calB_\infty + V)})_{t \geq 0}$ is a contractive, though not strongly continuous, semigroup on $L^\infty(\Omega; \bbC^N)$ that leaves $L^\infty(\Omega; \bbR^N)$ invariant.
	Thus, one can apply \cite[Theorem~1.1]{Dobrick2021} (which is a convergence theorem for semigroups without time continuity assumptions) to the restriction of the semigroup to the real space $L^\infty(\Omega; \bbR^N)$ to conclude the claimed convergence.
\end{proof}

\begin{remark}
	The value of $p$ enters Theorem~\ref{thm:convergence-for-coupled-heat-equation-lp-case} only as an assumption on the matrix potential $V$. The convergence of the coupled heat semigroup takes place on the entire $L^p$-scale, as shown in Corollary~\ref{cor:ultracontractive-convergent}.

	For $p = \infty$, we do not have a $C_0$-semigroup. However, we point out that we consider the case $p = \infty$ to be quite significant (rather than just an interesting side note) since the assumption that $V(x)$ be dissipative with respect to the $\ell^p$-norm is easiest to check if $p$ is either $1$ or $\infty$, see Proposition~\ref{prop:characterization-of-ell_p-dissipative-matrices}.
\end{remark}

The following simple example shows that there are situations where Theorem~\ref{thm:convergence-for-coupled-heat-equation-lp-case} can be applied, while the other results of Section~\ref{section:convergence} cannot.

\begin{example} 
	\label{ex:toy-p-dissipative}
	Let $N=2$, 
	let $a, b: \Omega \to [0, \infty)$ measurable and bounded, 
	and let the potential $V$ be given by
	\begin{align*}
		V(x) = 
		a(x)
		\begin{pmatrix}
			-1 & -1 \\
			-2 & -2
		\end{pmatrix}
		+
		b(x)
		\begin{pmatrix}
			-1 & -1 \\
			-1 & -1
		\end{pmatrix}
		\qquad \text{for all } x \in \Omega.
	\end{align*}
	Each matrix $V(x)$ is $\ell^\infty$-dissipative (Proposition~\ref{prop:characterization-of-ell_p-dissipative-matrices}). 
	So it follows from Theorem~\ref{thm:convergence-for-coupled-heat-equation-lp-case} that the solutions to the evolution equation~\eqref{eq:toy-example} converge uniformly as $t \to \infty$. The function $(\one, -\one)^{\operatorname{T}}$ is an equilibrium, so the limit is non-zero for some initial values.
	
	Again, we note that the matrices $V(x)$ are not simultaneously diagonalizable in general, since the matrices
	\begin{align*}
		\begin{pmatrix}
			-1 & -1 \\
			-2 & -2
		\end{pmatrix}
		\quad \text{and} \quad
		\begin{pmatrix}
			-1 & -1 \\
			-1 & -1
		\end{pmatrix}
	\end{align*}
	have different sets of eigenvectors. So the system cannot be uncoupled by diagonalization, in general.
	
	Moreover, we note that the matrices $V(x)$ are not $\ell^2$-dissipative in general, since a short computation shows that the symmetric part of the matrix
	\begin{align*}
		\begin{pmatrix}
			-y & -y \\
			-z & -z
		\end{pmatrix}
	\end{align*}
	always has a strictly positive eigenvalue if $y,z \in (0,\infty)$ are two distinct numbers. 
	Hence, the Hilbert space technique from Section~\ref{subsection:convergence-for-2-dissipative-potentials} is not applicable.
\end{example}

\subsection{Constant potentials} \label{subsection:constant-potential}

In this section, we discuss a setting in which the special algebraic structure of the coupled equation allows us to make use of an ad hoc argument in order to determine the long-term behaviour. 
In particular, one can show in this case that the long-term behaviour of the system is governed solely by the spectral properties of the potential.

In this section, we study the situation where
\begin{enumerate}
	\item[\upshape (a)] $A_1(x) = \ldots = A_N(x)$ for almost all $x \in \Omega$,
	\item[\upshape (b)] $V$ is constant almost everywhere.
\end{enumerate}
 
As $V$ is constant almost everywhere, there is a unique matrix which coincides with the function $V$; and abusing the notation we denote this matrix again by $V$. This means that there are two semigroups related to $V$:
\begin{enumerate}[\upshape (1)]
	\item On one hand, $(\ue^{t V})_{t \geq 0}$ defines a semigroup on $\bbC^N$. 
	\item On the other hand, $(\ue^{t V})_{t \geq 0}$ defines a semigroup on $L^2(\Omega; \bbC^N)$. 
\end{enumerate}
It is easy to see that $\norm{\ue^{t V}}_{2, \bbC^N \to \bbC^N} = \norm{\ue^{t V}}_{L^2 \to L^2}$ for all $t \geq 0$ 
(where the former norm is the one induced by the Euclidean norm on $\bbC^N$).

The next lemma shows that for constant potentials $V$, the semigroup $(\ue^{t (\calB_2 + V)})_{t \geq 0}$ is given by a tensor product of the semigroup generated by $V$ on $\bbC^N$ and the semigroup generated by $\calA_1 = \ldots = \calA_N$ on $L^2(\Omega;\bbC)$.

\begin{lemma} \label{lemma:tensor-product-semigroups}
	If the potential $V$ is constant almost everywhere and $A_1(x) = \dots = A_N(x)$ for almost all $x \in \Omega$, then one has
	\begin{align*}
		\ue^{t(\calB_2 + V)} = \ue^{t \calB_2} \ue^{t V} = \ue^{t V}\ue^{t \calB_2} 
		\qquad \text{for all } t \geq 0.
	\end{align*}
\end{lemma}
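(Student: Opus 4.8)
The plan is to exploit the product structure of the problem: when all $A_k$ agree, $\calB_2$ is ``block-scalar'' and hence commutes with the constant matrix multiplication $V$; once this commutation is established, the identity follows from textbook semigroup theory.

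First I would reduce everything to a commutation relation. Since $A_1(x) = \dots = A_N(x)$ for almost every $x$, the forms $a_1, \dots, a_N$ coincide, hence $\calA_1 = \dots = \calA_N =: \calA$, and consequently $D(\calB_2) = D(\calA)^N$ with $\calB_2 u = (\calA u_1, \dots, \calA u_N)$ for $u = (u_1, \dots, u_N) \in D(\calB_2)$. As $V$ is (almost everywhere equal to) a fixed matrix, the components of $Vu$ are the finite linear combinations $(Vu)_j = \sum_{k=1}^N V_{jk} u_k$; hence $V$ maps $D(\calB_2)$ into itself, and for every $u \in D(\calB_2)$ one has, componentwise, $(\calB_2 Vu)_j = \calA \sum_k V_{jk} u_k = \sum_k V_{jk}\, \calA u_k = (V \calB_2 u)_j$, i.e.\ $\calB_2 V = V \calB_2$ on $D(\calB_2)$.

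Next I would pass from the generator to the semigroup. From $\calB_2 V = V \calB_2$ on $D(\calB_2)$ one obtains $\ue^{t\calB_2} V = V \ue^{t\calB_2}$ for all $t \ge 0$: for fixed $u \in D(\calB_2)$ and $t \ge 0$ the map $[0,t] \ni s \mapsto \ue^{(t-s)\calB_2} V \ue^{s\calB_2} u$ is differentiable with derivative $-\ue^{(t-s)\calB_2}\calB_2 V \ue^{s\calB_2} u + \ue^{(t-s)\calB_2} V \calB_2 \ue^{s\calB_2} u$, which vanishes because $\ue^{s\calB_2}u \in D(\calB_2)$ and $\calB_2 V = V\calB_2$ there; comparing the values at $s = 0$ and $s = t$ gives $\ue^{t\calB_2}Vu = V\ue^{t\calB_2}u$, and this extends from the dense set $D(\calB_2)$ to all of $L^2(\Omega;\bbC^N)$ by boundedness. (Alternatively one may invoke the standard fact that a bounded operator commuting with a generator commutes with its resolvent, hence with the semigroup.) Since every power $V^n$ then commutes with $\ue^{t\calB_2}$, so does $\ue^{sV} = \sum_{n \ge 0}(sV)^n/n!$; in particular $\ue^{t\calB_2}\ue^{tV} = \ue^{tV}\ue^{t\calB_2}$ for all $t \ge 0$.

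Finally I would identify the product semigroup. Using the commutation just proved, $T(t) \coloneqq \ue^{t\calB_2}\ue^{tV}$ satisfies $T(t)T(s) = T(t+s)$, and $T$ is clearly strongly continuous, so it is a $C_0$-semigroup. Differentiating $t \mapsto \ue^{t\calB_2}\ue^{tV}u$ at $t = 0$ by the product rule shows that, for $u \in D(\calB_2)$, the generator of $T$ acts as $\calB_2 u + Vu$; thus the generator of $T$ extends $\calB_2 + V$ with domain $D(\calB_2)$. Since $V$ is bounded, $\calB_2 + V$ on $D(\calB_2)$ is itself a generator (as recalled in Subsection~\ref{subsection:behaviour-on-the-lp-scale}), so it must coincide with the generator of $T$, and uniqueness of generators gives $\ue^{t(\calB_2 + V)} = T(t) = \ue^{t\calB_2}\ue^{tV} = \ue^{tV}\ue^{t\calB_2}$ for all $t \ge 0$. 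The only point requiring a little care is the transfer of the commutation relation from the unbounded operator $\calB_2$ to the semigroup $(\ue^{t\calB_2})_{t \ge 0}$; everything else is bookkeeping. This subtlety can be avoided by reading the statement as a tensor-product identity: under the isometric identification $L^2(\Omega;\bbC^N) \cong L^2(\Omega;\bbC) \otimes \bbC^N$ one has $\calB_2 = \calA \otimes \id$ and $V = \id \otimes V$, and $\ue^{t(\calA \otimes \id + \id \otimes V)} = \ue^{t\calA} \otimes \ue^{tV}$ is the standard semigroup formula on a tensor product.
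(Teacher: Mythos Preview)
Your proof is correct and follows the same overall strategy as the paper: establish that $\calB_2$ and $V$ commute, pass to commutation of the semigroups, and deduce the product formula. The execution differs in the tools invoked. The paper observes that commutation of $\calB_2$ and $V$ is equivalent to commutation of their resolvents, then uses the Post--Widder inversion formula to obtain commutation of the semigroups, and finally appeals to Trotter's product formula to conclude $\ue^{t(\calB_2+V)} = \ue^{t\calB_2}\ue^{tV}$. You instead carry out both passages by hand: the differentiation argument $s \mapsto \ue^{(t-s)\calB_2}V\ue^{s\calB_2}u$ for the commutation, and the identification of the generator of $T(t) = \ue^{t\calB_2}\ue^{tV}$ for the product formula. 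Your route is more self-contained and avoids the black boxes of Post--Widder and Trotter, at the cost of some additional lines; the paper's route is terser but leans on heavier machinery for what is ultimately an elementary fact. Your closing remark about the tensor-product viewpoint $\ue^{t\calA}\otimes\ue^{tV}$ is also apt and in fact matches the spirit of the paper's title for the lemma.
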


\begin{proof}
	As $V$ is constant almost everywhere and the operators on the diagonal of $\calB_2$ all coincide, it is easy to see that the operators $\calB_2$ and $V$ commute on $L^2(\Omega; \bbC^N)$, which is in turn equivalent to the statement that the resolvents of both operator commute. So as a consequence of the Post--Widder inversion formula (cf.\ \cite[Corollary~III.5.5]{Engel2000}) this implies that the semigroups $(\ue^{t \calB_2})_{t \geq 0}$ and $(\ue^{t V})_{t \geq 0}$ commute on $L^2(\Omega; \bbC^N)$. Therefore, it follows from Trotter's product formula that $\ue^{t(\calB_2 + V)} = \ue^{t \calB_2} \ue^{t V} = \ue^{t V}\ue^{t \calB_2}$ for all $t \geq 0$.
\end{proof}

As a consequence of Lemma~\ref{lemma:tensor-product-semigroups} the long-term behaviour of the semigroup $(\ue^{t (\calB_2 + V)})_{t \geq 0}$ on $L^2(\Omega)$ depends solely on the asymptotic behaviour of the semigroup $(\ue^{t V})_{t \geq 0}$ on $\bbC^N$. 

\begin{proposition} \label{prop:constant-potential}
	Suppose that the potential $V$ is constant almost everywhere, 
	and that $A_1(x) = \ldots = A_N(x)$ for almost all $x \in \Omega$. 
	Then the following assertions are equivalent:
	\begin{enumerate}[\upshape (i)]
		\item 
		The matrices $\ue^{t V}$ converge on $\bbC^N$ as $t \to \infty$. 
		
		\item 
		The solutions to the coupled heat equation~\eqref{eq:coupled-heat-equation-on-bounded-domain-general} 
		converge uniformly as $t \to \infty$. 
	\end{enumerate}
\end{proposition}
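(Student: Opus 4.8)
The plan is to exploit the factorisation $\ue^{t(\calB_2+V)} = \ue^{t\calB_2}\,\ue^{tV}$ provided by Lemma~\ref{lemma:tensor-product-semigroups}, where on the right-hand side $\ue^{tV}$ denotes the operator on $L^2(\Omega;\bbC^N)$ given by multiplication with the constant matrix $\ue^{tV}$. The key elementary observation is that the finite-dimensional subspace $W \coloneqq \{\, z \otimes \one : z \in \bbC^N \,\}$ of spatially constant functions is invariant under $(\ue^{t(\calB_2+V)})_{t \ge 0}$ and that this semigroup acts on $W$ exactly as $(\ue^{tV})_{t \ge 0}$ acts on $\bbC^N$: since $\ue^{t\calA_k}\one = \one$ for every $k$, Lemma~\ref{lemma:tensor-product-semigroups} yields $\ue^{t(\calB_2+V)}(z \otimes \one) = (\ue^{tV}z) \otimes \one$ for all $t \ge 0$ and all $z \in \bbC^N$, and $z \mapsto z \otimes \one$ is a scalar multiple of an isometry of $\bbC^N$ onto $W$, hence a topological isomorphism.

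Granting this, the implication (ii) $\Rightarrow$ (i) is immediate. If the solutions converge uniformly, then $\ue^{t(\calB_2+V)}$ converges at least strongly as $t \to \infty$, so $(\ue^{tV}z) \otimes \one$ converges in $L^2(\Omega;\bbC^N)$ for every fixed $z$; since $W$ is closed (being finite-dimensional), the limit again lies in $W$, and transporting back through the isomorphism $z \mapsto z \otimes \one$ shows that $\ue^{tV}z$ converges in $\bbC^N$ for every $z$. In a finite-dimensional space this is the same as operator-norm convergence of the matrices $\ue^{tV}$.

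For the converse (i) $\Rightarrow$ (ii) I would first record that $(\ue^{t\calB_2})_{t \ge 0}$ converges in operator norm on $L^2(\Omega;\bbC^N)$ as $t \to \infty$. This follows by the same spectral argument as in the proof of Theorem~\ref{theorem:convergence-quasipositive-potential}, carried out blockwise: each $(\ue^{t\calA_k})_{t \ge 0}$ is a positive, contractive and immediately compact $C_0$-semigroup whose spectral bound equals $0$ (because $\one$ is a fixed vector), so its boundary spectrum reduces to the single point $0$, which is a first-order pole of the resolvent; hence $\ue^{t\calA_k}$, and therefore $\ue^{t\calB_2} = \operatorname{diag}(\ue^{t\calA_1},\dots,\ue^{t\calA_N})$, converges in operator norm. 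Write $\mathbb{P} \coloneqq \lim_{t\to\infty}\ue^{t\calB_2}$, and use the hypothesis together with the identity $\norm{\ue^{tV}}_{L^2\to L^2} = \norm{\ue^{tV}}_{\bbC^N\to\bbC^N}$ noted right before Lemma~\ref{lemma:tensor-product-semigroups} to write $\mathbb{Q} \coloneqq \lim_{t\to\infty}\ue^{tV}$ for the operator-norm limit of the multiplication operators; in particular $\sup_{t\ge0}\norm{\ue^{tV}}_{L^2\to L^2} < \infty$. Since $\norm{\ue^{t\calB_2}}_{L^2\to L^2} \le 1$, the decomposition
\begin{align*}
	\ue^{t(\calB_2+V)} - \mathbb{P}\mathbb{Q} = \ue^{t\calB_2}\bigl(\ue^{tV} - \mathbb{Q}\bigr) + \bigl(\ue^{t\calB_2} - \mathbb{P}\bigr)\mathbb{Q}
\end{align*}
and the triangle inequality give operator-norm convergence of $\ue^{t(\calB_2+V)}$ as $t \to \infty$; by Corollary~\ref{cor:ultracontractive-convergent} this is precisely the statement that the solutions to~\eqref{eq:coupled-heat-equation-on-bounded-domain-general} converge uniformly.

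Every step here is routine except the operator-norm convergence of the unperturbed semigroup $(\ue^{t\calB_2})_{t\ge0}$, and even that is nothing more than a blockwise repetition of the Perron--Frobenius reasoning already used for Theorem~\ref{theorem:convergence-quasipositive-potential}; I therefore do not expect any genuine obstacle, the only point to keep an eye on being the consistent use of the $\ell^2$-norm on $\bbC^N$ when identifying the matrix semigroup $\ue^{tV}$ with its realisation on $L^2(\Omega;\bbC^N)$.
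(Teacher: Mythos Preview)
Your proof is correct and follows essentially the same approach as the paper: both directions rely on the factorisation from Lemma~\ref{lemma:tensor-product-semigroups}, with (ii)~$\Rightarrow$~(i) handled via the invariant subspace of spatially constant functions and (i)~$\Rightarrow$~(ii) obtained by combining operator-norm convergence of $\ue^{t\calB_2}$ with that of $\ue^{tV}$. You supply more detail than the paper (the explicit splitting $\ue^{t\calB_2}(\ue^{tV}-\mathbb{Q}) + (\ue^{t\calB_2}-\mathbb{P})\mathbb{Q}$ and the blockwise Perron--Frobenius justification for the convergence of $\ue^{t\calB_2}$), but the underlying argument is the same.
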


\begin{proof}
	(i) $\Rightarrow$ (ii): 
	Since each of the (identical) semigroups $(\ue^{t \calA_k})_{t \geq 0}$ converges with respect to the operator norm as $t \to \infty$ (on $L^p(\Omega;\bbC)$ for any $p \in [1,\infty]$), so does the semigroup $\ue^{t \calB_2}$ (on $L^p(\Omega;\bbC^N)$ for any $p \in [1,\infty]$).
	Hence, this implication is an immediate consequence of Lemma~\ref{lemma:tensor-product-semigroups}. 

	(ii) $\Rightarrow$ (i): Suppose that $\ue^{t V}$ does not converge on $\bbC^N$ as $t \to \infty$. 
	Then there exists some vector $z \in \bbC^N$ such that $\ue^{t V} z$ does not converge as $t \to \infty$. 
	Now consider the function $\one \otimes \, z \coloneqq (z_1 \one, \ldots, z_N \one)^{\operatorname{T}}: \Omega \to \bbC^N$. 
	By Lemma~\ref{lemma:tensor-product-semigroups} one has
	\begin{align*}
		\ue^{t (\calB_2 + V)} (\one \otimes \, z) 
		=
		\ue^{tV} \ue^{t\calB_2} (\one \otimes \, z) = \ue^{tV} (\one \otimes \, z) = \one \otimes\, (\ue^{tV} z),
	\end{align*}
	for all $t \geq 0$ which does not converge as $t \to \infty$.
\end{proof}

As a simple example, we consider a similar potential as in Example~\ref{ex:toy-2-dissipative} -- and now we assume that the potential is constant, but we also allow for the case that it is $0$.

\begin{example} \label{ex:toy-constant}
	Let $N=2$, 
	let $a \in \bbR$, and let the potential $V$ be given by
	\begin{align*}
		V(x) \coloneqq V \coloneqq
		\begin{pmatrix}
			0 & -a \\
			a &  0
		\end{pmatrix} 
		\qquad \text{for all } x \in \Omega.
	\end{align*}
	As in Example~\ref{ex:toy-2-dissipative}, the $\ell^2$-dissipativity of the matrix $V$ implies that
	all solutions to the evolution equation~\eqref{eq:toy-example} 
	are bounded on $L^p(\Omega;\bbC^N)$ for any $p \in [1, \infty]$ (see Corollary~\ref{cor:ultracontractive-bounded}).
	
	The semigroup $(e^{tV})_{t \ge 0}$ on $\bbC^2$ converges as $t \to \infty$ if and only if $a = 0$.
	Hence, by Proposition~\ref{prop:constant-potential},
	we have convergence of all solutions to~\eqref{eq:toy-example} as $t \to \infty$ if and only if $a = 0$.
\end{example}

Of course, for this specific example, the same conclusion can also be derived from Theorem~\ref{thm:convergence-for-coupled-heat-equation-l2-case}.

\subsection{Simultaneously diagonalizable potentials} \label{subsection:decoupling}

In this section, we assume that
\begin{enumerate}
	\item[\upshape (a)] 
	one has $A_1(x) = \ldots A_N(x)$ for almost all $x \in \Omega$;
	
	\item[\upshape (b)] 
	the matrices $V(x)$ are simultaneously diagonalizable, 
	i.e.\ there exists an invertible matrix $U \in \bbC^{N \times N}$ as well as 
	bounded and measurable functions $\lambda_1, \ldots, \lambda_N: \Omega \to \bbC$ such that
	\begin{align}
		\label{eq:diagonalization-decoupling-system}
		U V(x) U^{-1} 
		= 
		\begin{pmatrix}
			 \lambda_1(x) &        &              \\
			              & \ddots &              \\
			              &        & \lambda_N(x)
		\end{pmatrix} 
		\qquad \text{for almost every } x \in \Omega.
	\end{align}
	
	\item[\upshape (c)] 
	we have $\re \lambda_k(x) \le 0$ for all $k \in \{1, \ldots, N\}$ and almost all $x \in \Omega$.
\end{enumerate}

As a result, the operators $\calA_1, \ldots, \calA_N$ coincide and, for the sake of notational simplicity, those operators will all be denoted by $\calA$. Assumptions~(a) and~(b) allow us to decouple the system \eqref{eq:coupled-heat-equation-on-bounded-domain-general} since $U \calB_2 U^{-1} = \calB_2$. This means that \eqref{eq:coupled-heat-equation-on-bounded-domain-general} is similar, via multiplication with $U$, to $N$ decoupled scalar equations
\begin{align} \label{eq:the-equation-diagonized}
	\frac{d}{dt}
	\begin{pmatrix}
		v_1 \\ \vdots \\ v_N
	\end{pmatrix}
	= 
	\begin{pmatrix}
		\calA v_1 \\
		\vdots \\
		\calA v_N
	\end{pmatrix}
	+ 
	\begin{pmatrix}
		\lambda_1 v_1 \\ \vdots \\ \lambda_N v_N
	\end{pmatrix} = 
	\begin{pmatrix}
		(\calA + \lambda_1) v_1 \\
		\vdots \\
		(\calA + \lambda_N) v_N
	\end{pmatrix}.
\end{align}
The solutions to \eqref{eq:cp-bounded} are then given by $u(t) = U^{-1} v(t)$ for all $t \geq 0$. 
Thus, the solutions to~\eqref{eq:cp-bounded} convergence uniformly if and only if the solutions to~\eqref{eq:the-equation-diagonized} do. On the other hand, the solutions to~\eqref{eq:the-equation-diagonized} converge uniformly if and only if this is true for the solutions to the scalar-valued equations 
\begin{align*}
	\frac{d}{dt} w = \calA w + \lambda_k w
\end{align*}
for all $k = 1, \ldots, N$ -- and by applying Theorem~\ref{thm:convergence-for-coupled-heat-equation-l2-case} to the scalar-valued case $N = 1$ (which is possible due to assumption~(c) on the eigenvalues $\lambda_k(x)$), we see that for any $k \in \{1, \ldots, N\}$ the solutions to the latter equation converge if and only if there is no $\ui\beta \in \ui\bbR \setminus \{0\}$ which is equal to $\lambda_k$ almost everywhere. Thus, one has the following result.

\begin{proposition} \label{prop:decoupled-system}
	Let the assumptions~{\upshape (a)--(c)} from the beginning of Subsection~\ref{subsection:decoupling} be satisfied.
	Then the following assertions are equivalent:
	\begin{enumerate}[\upshape (i)]
		\item The solutions to the coupled heat equation~\eqref{eq:coupled-heat-equation-on-bounded-domain-general} converge uniformly as $t \to \infty$.
		
		\item For each $k \in \{1, \ldots, N\}$ the following holds: 
		there does not exist a number $\ui \beta \in \ui \bbR \setminus \{0\}$ 
		which is equal to the function $\lambda_k$ almost everywhere.
	\end{enumerate}
\end{proposition}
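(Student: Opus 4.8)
The plan is to make rigorous the reduction sketched in the paragraph preceding the statement: diagonalize the potential by the similarity $U$, observe that the resulting system decouples into $N$ scalar equations, and apply the $\ell^2$-case Theorem~\ref{thm:convergence-for-coupled-heat-equation-l2-case} with $N=1$ to each of them. Since uniform convergence in the sense of Definition~\ref{def:uniform-boundedness-convergence-for-coupled-heat-equation}(b) does not depend on $p$, it suffices to work on $L^2(\Omega;\bbC^N)$ throughout.

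First I would record the two algebraic facts behind the decoupling. Because all operators on the diagonal of $\calB_2$ coincide (with $\calA$, say), $\calB_2$ commutes with the bounded multiplication operator induced by any constant matrix; in particular $U \calB_2 U^{-1} = \calB_2$. Since $U$ and $U^{-1}$ are bounded and boundedly invertible on $L^2(\Omega;\bbC^N)$, and since $UVU^{-1}$ is the multiplication operator by $x \mapsto U V(x) U^{-1} = \operatorname{diag}(\lambda_1(x),\dots,\lambda_N(x))$ thanks to~\eqref{eq:diagonalization-decoupling-system}, we obtain $U(\calB_2+V)U^{-1} = \calB_2 + \operatorname{diag}(\lambda_1,\dots,\lambda_N)$ and hence $U\,\ue^{t(\calB_2+V)}\,U^{-1} = \ue^{t(\calB_2 + \operatorname{diag}(\lambda_1,\dots,\lambda_N))}$ for all $t\ge 0$; this is precisely the semigroup solving~\eqref{eq:the-equation-diagonized}. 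Conjugation by the fixed invertible operator $U$ preserves operator-norm convergence as $t\to\infty$, so the solutions to~\eqref{eq:coupled-heat-equation-on-bounded-domain-general} converge uniformly if and only if those of~\eqref{eq:the-equation-diagonized} do. Finally, with the norm~\eqref{eq:vector-valued-lp-norm}, $L^2(\Omega;\bbC^N)$ is the direct sum of $N$ copies of $L^2(\Omega;\bbC)$ and the diagonal semigroup splits as the direct sum of the scalar semigroups $(\ue^{t(\calA+\lambda_k)})_{t\ge 0}$; therefore~\eqref{eq:the-equation-diagonized} converges uniformly if and only if each scalar equation $\frac{d}{dt}w = \calA w + \lambda_k w$ does.

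It then remains to characterize, for each fixed $k$, uniform convergence of the scalar equation. Here I would apply Theorem~\ref{thm:convergence-for-coupled-heat-equation-l2-case} in the case $N=1$ to the operator $\calA$ and the potential $\lambda_k$: the hypothesis of that theorem is that $\lambda_k(x)$ be $\ell^2$-dissipative on $\bbC$, which for a $1\times 1$ ``matrix'' means exactly $\re\lambda_k(x)\le 0$ for a.e.\ $x$ --- this is assumption~(c). For $N=1$, condition~(ii) of the theorem reads: for every $\ui\beta\in\ui\bbR\setminus\{0\}$ and every full-measure $\widetilde{\Omega}\subseteq\Omega$ one has $\bigcap_{x\in\widetilde{\Omega}}\ker(\ui\beta - \lambda_k(x)) = \{0\}$; since each kernel is a subspace of $\bbC$, the intersection is non-zero exactly when $\lambda_k(x) = \ui\beta$ for a.e.\ $x$. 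Hence the scalar equation for $\lambda_k$ converges uniformly if and only if there is no $\ui\beta\in\ui\bbR\setminus\{0\}$ with $\lambda_k = \ui\beta$ almost everywhere, and stringing the three reductions together yields the equivalence of Proposition~\ref{prop:decoupled-system}.

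The argument is essentially bookkeeping, so I do not anticipate a genuine obstacle; the two points deserving a moment of care are, first, checking that conjugation by $U$ really does intertwine the two semigroups --- which reduces to the commutation $U\calB_2 U^{-1}=\calB_2$, valid precisely because the $\calA_k$ coincide --- and, second, correctly specializing Theorem~\ref{thm:convergence-for-coupled-heat-equation-l2-case} to the scalar case and reading off what its condition~(ii) says when the potential is a single complex-valued function.
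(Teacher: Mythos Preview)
Your proposal is correct and follows exactly the paper's approach: the paper's proof is precisely the discussion preceding the proposition --- decouple via conjugation by $U$ (using $U\calB_2 U^{-1}=\calB_2$ since all $\calA_k$ coincide), reduce to the $N$ scalar equations, and apply Theorem~\ref{thm:convergence-for-coupled-heat-equation-l2-case} with $N=1$ to each. You have simply made the bookkeeping explicit, including the specialization of condition~(ii) of that theorem to the scalar case, which the paper leaves to the reader.
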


We note that Example~\ref{ex:toy-2-dissipative} can also be treated by utilizing Proposition~\ref{prop:decoupled-system} since the matrices $V(x)$ in the example are simultaneously diagonalizable (and since the operators $\calA_1$ and $\calA_2$ in equation~\eqref{eq:toy-example} are both equal to the Laplace operator and thus coincide). 
Another simple example is the following:

\begin{example}
	\label{ex:toy-simultaneously-diagonalisable}
	Let $N=2$,
	let $a: \Omega \to \bbC$ be a bounded and measurable function 
	that satisfies $\re a(x) \ge 0$ for almost all $x \in \Omega$. 
	Let the potential $V$ be given by
	\begin{align*}
		V(x) \coloneqq 
		-a(x)
		\begin{pmatrix}
			1 & 2 \\
			1 & 2
		\end{pmatrix} 
		\qquad \text{for all } x \in \Omega.
	\end{align*}
	Then all the matrices $V(x)$ are simultaneously diagonalizable 
	and their eigenvalue curves are given by $\lambda_1(x) = 0$ and $\lambda_2(x) = -3a(x)$ for all $x \in \Omega$.
	
	So $\lambda_1$ is constant, but its value is not in $\ui \bbR \setminus \{0\}$. 
	Hence, Proposition~\ref{prop:decoupled-system} shows the following: 
	if $a$ is almost everywhere constant and equal to an element of $\ui \bbR \setminus \{0\}$, 
	then the solutions to~\eqref{eq:toy-example} do not converge as $t \to \infty$. 
	In all other cases the solutions to~\eqref{eq:toy-example} 
	converge uniformly as $t \to \infty$.
\end{example}

\subsubsection*{Concluding remarks}

Clearly, much more remains to be done in the case that the potential $V$ is not dissipative, since most methods presented in this paper do not work in this case. In fact, it is not even clear to the authors in general how to check boundedness of the solutions to~\eqref{eq:coupled-heat-equation-on-bounded-domain-general} if $V$ is not dissipative with respect to any $\ell^p$-norm on $\bbC^N$ (for one exception, though, see Proposition~\ref{prop:boundedness-quasipositive-potential}). 

Another direction of generalization is led by the idea to consider compact Riemannian manifolds in place of the bounded domain $\Omega \subseteq \bbR^d$.

Finally, in view of the coupled first order equations considered in \cite{Dobrick2023}, the question arises what happens, in general, if non-elliptic differential operators are coupled by a matrix-valued potential.

\subsubsection*{Acknowledgements}

We are indebted to the referee for pointing out how to prove Proposition~\ref{prop:eigenvalues-of-coupled-system} 
without the additional assumption that the diffusion coefficients $A_k(x)$ be symmetric.
We thank Abdelaziz Rhandi for pointing out the diagonalization argument outlined in Subsection~\ref{subsection:decoupling}. Furthermore, we are indebted to Fabian Wirth for bringing reference \cite{Belitskiui1988} to our attention. Moreover, Alexander Dobrick thanks Florian Pannasch for a fruitful discussion on results from \cite{Delmonte2011} regarding the theory of bi-continuous semigroups.

\appendix

\section{Dissipativity of real matrices} \label{section:dissipativity-of-real-matrices}

Dissipativity of matrices with real entries plays an important role in the main text. Therefore, we recall a characterization of dissipativity of matrices with respect to various $\ell^p$-norms in the following proposition.

\begin{proposition}	
	\label{prop:characterization-of-ell_p-dissipative-matrices}
	Let $N \in \mathbb{N}$ and let $C = (c_{jk}) \in \bbR^{N \times N}$. Then the following assertions hold:
	\begin{enumerate}[\upshape (a)]
		\item The matrix $C$ is dissipative with respect to the $\ell^2$-norm on $\bbR^N$ if and only if all eigenvalues of the symmetric part $\frac{1}{2}(C+C^T)$ of $C$ are contained in $(-\infty,0]$.
		
		\item The matrix $C$ is dissipative with respect to the $\ell^1$-norm on $\bbR^N$ if and only if
			\begin{align*}
				c_{kk} \le - \sum_{j \in \{1,\ldots,N\} \setminus \{k\}} \modulus{c_{jk}}
			\end{align*}
			for each $k \in \{1,\ldots,N\}$.
			
		\item The matrix $C$ is dissipative with respect to the $\ell^\infty$-norm on $\bbR^N$ if and only if
			\begin{align*}
				c_{kk} \le - \sum_{j \in \{1,\ldots,N\} \setminus \{k\}} \modulus{c_{kj}}
			\end{align*}
			for each $k \in \{1,\ldots,N\}$.
			
		\item Let $p \in [1,\infty)$. The matrix $C$ is dissipative with respect to the $\ell^p$-norm on $\bbR^N$ if and only if
			\begin{align*}
				(\sgn \xi \cdot \modulus{\xi}^{p-1})^{\operatorname{T}}\, C \xi \le 0
			\end{align*}
			for all $\xi \in \bbR^N$; here, the vector $\sgn \xi \in \{-1,0,1\}^N$ contains the signs of the entries of $\xi$, and its product with the vector $\modulus{\xi}^{p-1}$ is computed entrywise.
	\end{enumerate}
\end{proposition}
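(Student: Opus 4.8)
The plan is to reduce every assertion to the standard characterization of dissipativity via a one-sided norm derivative, and then to carry out, for each $\ell^p$-norm, an elementary computation. To this end, fix a norm $\norm{\argument}$ on $\bbR^N$ and a matrix $C \in \bbR^{N\times N}$. Since $C$ is bounded we have $\ue^{hC} = \id + hC + O(h^2)$ as $h \downarrow 0$, so for every $\xi \in \bbR^N$ the limit
\[
	\tau(\xi) \coloneqq \lim_{h \downarrow 0} \frac{\norm{\xi + hC\xi} - \norm{\xi}}{h} = \lim_{h \downarrow 0} \frac{\norm{\ue^{hC}\xi} - \norm{\xi}}{h}
\]
exists, the first one because $h \mapsto \norm{\xi + hC\xi}$ is convex. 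Writing $g_\xi(t) \coloneqq \norm{\ue^{tC}\xi}$, the semigroup law gives $g_\xi'(t^+) = \tau(\ue^{tC}\xi)$; since $g_\xi$ is locally Lipschitz, it is non-increasing on $[0,\infty)$ precisely when $\tau(\ue^{tC}\xi) \le 0$ for every $t \ge 0$. Quantifying over all $\xi$, it follows that $C$ is dissipative with respect to $\norm{\argument}$ — equivalently, $\ue^{tC}$ is $\norm{\argument}$-contractive for every $t \ge 0$ (cf.\ \cite[Section~II.3.b]{Engel2000}) — if and only if $\tau(\xi) \le 0$ for all $\xi \in \bbR^N$. It then remains to compute $\tau$ for the $\ell^p$-norms and to rewrite ``$\tau \le 0$ on $\bbR^N$'' in the stated forms.

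For $p = 2$, differentiating $\norm{\xi + hC\xi}_2^2 = \xi^T\xi + 2h\,\xi^T C\xi + O(h^2)$ at $h = 0$ gives $\tau(\xi) = \xi^T C\xi / \norm{\xi}_2 = \xi^T \tfrac12(C + C^T)\xi / \norm{\xi}_2$ for $\xi \neq 0$, so ``$\tau \le 0$ on $\bbR^N$'' is precisely negative semidefiniteness of the symmetric part $\tfrac12(C + C^T)$; as this matrix is symmetric, that amounts to all of its (real) eigenvalues lying in $(-\infty, 0]$, which is~(a). For $p \in (1, \infty)$ each summand of $\norm{\xi + hC\xi}_p^p = \sum_j \modulus{\xi_j + h(C\xi)_j}^p$ is differentiable at $h = 0$ with derivative $p\,\sgn(\xi_j)\modulus{\xi_j}^{p-1}(C\xi)_j$, which vanishes when $\xi_j = 0$; hence $\tau(\xi) = \norm{\xi}_p^{1-p}\,(\sgn\xi\cdot\modulus{\xi}^{p-1})^T C\xi$ for $\xi \neq 0$, which is the characterization stated in~(d) for these $p$.

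The cases $p = 1$ and $p = \infty$ require a little more care since the norm is then not differentiable at vectors with a vanishing coordinate. For $p = 1$ one computes $\tau(\xi) = \sum_{j:\,\xi_j \neq 0}\sgn(\xi_j)(C\xi)_j + \sum_{j:\,\xi_j = 0}\modulus{(C\xi)_j}$, and for $p = \infty$, $\xi \neq 0$, one gets $\tau(\xi) = \max\{\sgn(\xi_j)(C\xi)_j : \modulus{\xi_j} = \norm{\xi}_\infty\}$, since the right derivative of a maximum of finitely many functions is the maximum of the right derivatives over the active index set. Testing $\xi = e_k$ in the first formula shows that the column condition in~(b) is necessary, while the elementary bound $\sgn(\xi_j)(C\xi)_j \le c_{jj}\modulus{\xi_j} + \sum_{k \neq j}\modulus{c_{jk}}\modulus{\xi_k}$ — which also holds for $\xi_j = 0$, its left-hand side then being $\modulus{(C\xi)_j}$ — gives, after summing over $j$ and collecting the coefficient of each $\modulus{\xi_k}$, its sufficiency; this proves~(b). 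The analogous estimate applied over the active index set, using $\modulus{\xi_k} \le \norm{\xi}_\infty$, proves~(c); alternatively~(c) follows from~(b) applied to $C^T$, because $\norm{\ue^{tC}}_{\infty\to\infty} = \norm{\ue^{tC^T}}_{1\to1}$. Finally, the case $p = 1$ of~(d), whose inequality reads $(\sgn\xi)^T C\xi \le 0$, coincides with ``$\tau(\xi) \le 0$'' on the dense set of $\xi$ having no vanishing coordinate, and for general $\xi$ the value $\tau(\xi)$ for the $\ell^1$-norm is recovered as the limit of $(\sgn\eta)^T C\eta$ along the perturbations $\eta = \xi + \varepsilon\sum_{j:\,\xi_j = 0}\sgn((C\xi)_j)\,e_j$ as $\varepsilon \downarrow 0$; hence~(d) at $p = 1$ is equivalent to~(b).

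I expect the main obstacle to be exactly this non-smoothness of the $\ell^1$- and $\ell^\infty$-norms at vectors with vanishing coordinates: one has to keep the extra $\modulus{(C\xi)_j}$-terms in $\tau$ under control in the sufficiency estimates, and to check that the clean formula in~(d) at $p = 1$ still captures these terms once it is quantified over all $\xi$. Everything else reduces to one-line differentiations and a rearrangement of finite sums.
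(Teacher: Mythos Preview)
Your proof is correct. Both you and the paper reduce each assertion to the explicit semi-inner product on $\ell^p$, but through equivalent entry points: the paper uses the duality-map characterization of dissipativity (invoking that dissipativity equals strict dissipativity and that $\sgn\xi\cdot\modulus{\xi}^{p-1}$ is a normalizing functional for $\xi$), whereas you compute the right derivative $\tau(\xi)$ of $\norm{\ue^{tC}\xi}$ directly. The resulting computations are essentially identical --- in particular, your column-sum estimate for the sufficiency in~(b) is exactly the calculation the paper carries out. The one structural difference is the logical order at $p=1$: the paper proves~(d) first (for all $p\in[1,\infty)$) via the duality map and then derives~(b) from it, while you establish~(b) directly from $\tau$ and afterwards recover~(d) at $p=1$ by a perturbation that eliminates zero coordinates. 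Your route has the minor advantage of being self-contained (no appeal to ``dissipative $=$ strictly dissipative''), at the cost of having to handle the non-smooth $\ell^1$ and $\ell^\infty$ cases by hand; the paper's duality formulation absorbs that non-smoothness into the choice of normalizing functional.
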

\begin{proof}
	(a) It follows from $\xi^{\operatorname{T}} C \xi = \xi^{\operatorname{T}} \, \frac{1}{2}(C^{\operatorname{T}}+C) \xi$ for all $\xi \in \bbR^N$ that $C$ is dissipative with respect to the $\ell^2$-norm on $\bbR^N$ if and only if $\frac{1}{2}(C+C^{\operatorname{T}})$ is so. Since $\frac{1}{2}(C+C^{\operatorname{T}})$ is symmetric, this proves the assertion.
	
	(d) Endow $\bbR^N$ with the $\ell^p$-norm for a fixed $p \in [1,\infty)$. For each $\xi \in \bbR^N$ of norm $\norm{\xi}_p = 1$ the vector $\sgn \xi \cdot \modulus{\xi}^{p-1}$, if considered as an element of the dual space of $\bbR^N$, also has norm equal to $1$ and satisfies $(\sgn \xi \cdot \modulus{\xi}^{p-1})^{\operatorname{T}} \xi = 1$. Since a matrix is dissipative if and only if it is strictly dissipative, this proves~(d).
	
	(b) If the estimate in~(ii) is satisfied, then it follows for each $\xi \in \bbR^N$ that
	\begin{align*}
		(\sgn \xi)^{\operatorname{T}} C \xi & = \sum_{j=1}^N \sum_{k=1}^N \sgn(\xi_j) \, c_{jk} \, \xi_k \\
		& = \sum_{k=1}^N \left(c_{kk} \modulus{\xi_k} \; + \sum_{j \in \{1,\ldots,N\} \setminus \{k\}}^N \sgn(\xi_j) c_{jk}\xi_k \right) \\
		& \le \sum_{k=1}^N \left(c_{kk} \modulus{\xi_k} \; + \sum_{j \in \{1,\ldots,N\} \setminus \{k\}}^N  \modulus{c_{jk}} \modulus{\xi_k} \right) \le 0,
	\end{align*}
	so $C$ is dissipative according to~(d). 
	
	Now assume conversely that $C$ is dissipative. Fix $k \in \{1,\ldots,N\}$ and let $e_k \in \bbR^N$ denote the $k$-th canonical unit vector. Moreover, we define a vector $\xi \in \bbR^N$, which we consider as a functional on $\bbR^N$, in the following way: we set $\xi_k = 1$ and $\xi_j = \sgn c_{jk}$ for all $j \in \{1,\ldots,N\} \setminus \{k\}$. Then $\norm{\xi}_\infty = 1$ and $\xi^T e_k = 1$, so the strict dissipativity of $C$ implies that
	\begin{align*}
		0 \ge \xi^{\operatorname{T}} C e_k = \sum_{j=1}^N \xi_j c_{jk} = c_{kk} \; + \sum_{j \in \{1,\ldots,N\} \setminus \{k\}} \modulus{c_{jk}},
	\end{align*}
	which proves the assertion.
	
	(c) This follows from~(b) by duality.
\end{proof}

In the above proof we could, of course, also first prove~(c) directly and then derive~(b) from~(c) by duality; this is for instance done in \cite[Remark~2.1.2]{GlueckDISS}.

The above characterizations can be adapted to the case of complex matrices, of course.
For instance, a matrix $C \in \bbC^{N \times N}$ is dissipative with respect to the $\ell^2$-norm on $\bbC^N$ if and only if all eigenvalues of the Hermitian part $\frac{1}{2}(C+C^*)$ of $C$ are located in $(-\infty,0]$.
We refrain from explicitly stating the complex case of~(ii)--(iv) since $\ell^p$-dissipativity of matrices is, 
for $p \not=2$, only used for real matrices in the main text.

\bibliographystyle{plain}
\bibliography{literature}

\begin{thebibliography}{10}

\bibitem{Addona2019}
Davide Addona, Luciana Angiuli, and Luca Lorenzi.
\newblock On invariant measures associated with weakly coupled systems of
  {K}olmogorov equations.
\newblock {\em Adv. Differential Equations}, 24(3-4):137--184, 2019.

\bibitem{Amann1978}
Herbert {Amann}.
\newblock {Existence and stability of solutions for semi-linear parabolic
  systems, and applications to some diffusion reaction equations}.
\newblock {\em {Proc. R. Soc. Edinb., Sect. A, Math.}}, 81:35--47, 1978.

\bibitem{Arendt2004}
Wolfgang Arendt.
\newblock Semigroups and evolution equations: functional calculus,regularity
  and kernel estimates.
\newblock In {\em Evolutionary equations. {V}ol. {I}}, Handb. Differ. Equ.,
  pages 1--85. North-Holland, Amsterdam, 2004.

\bibitem{Arendt2020}
Wolfgang Arendt and Jochen Gl\"{u}ck.
\newblock Positive irreducible semigroups and their long-time behaviour.
\newblock {\em Philos. Trans. Roy. Soc. A}, 378(2185):611--627, 2020.

\bibitem{Arendt1986}
Wolfgang Arendt, Annette Grabosch, G{\"u}nther Greiner, Ulrich Groh,
  Heinrich~P. Lotz, Ulrich Moustakas, Rainer Nagel, Frank Neubrander, and Ulf
  Schlotterbeck.
\newblock {\em One-parameter semigroups of positive operators}, volume 1184 of
  {\em Lecture Notes in Mathematics}.
\newblock Springer-Verlag, Berlin, 1986.

\bibitem{Batkai2017}
Andr\'as B\'atkai, Marjeta Kramar~Fijav\v{z}, and Abdelaziz Rhandi.
\newblock {\em Positive operator semigroups}, volume 257 of {\em Operator
  Theory: Advances and Applications}.
\newblock Birkh\"auser/Springer, Cham, 2017.
\newblock From finite to infinite dimensions, With a foreword by Rainer Nagel
  and Ulf Schlotterbeck.

\bibitem{Belitskiui1988}
Genrich~R. Belitski{\u\i} and Yu.~I. Lyubich.
\newblock {\em Matrix norms and their applications}, volume~36 of {\em Operator
  Theory: Advances and Applications}.
\newblock Birkh\"auser Verlag, Basel, 1988.
\newblock Translated from the Russian by A. Iacob.

\bibitem{Cazenave1998}
Thierry {Cazenave} and Alain {Haraux}.
\newblock {\em {An introduction to semilinear evolution equations. Transl. by
  Yvan Martel.}}, volume~13.
\newblock Oxford: Clarendon Press, 1998.

\bibitem{Delmonte2011}
Simone Delmonte and Luca Lorenzi.
\newblock On a class of weakly coupled systems of elliptic operators with
  unbounded coefficients.
\newblock {\em Milan J. Math.}, 79(2):689--727, 2011.

\bibitem{Dobrick2021}
Alexander Dobrick and Jochen Gl\"{u}ck.
\newblock Uniform convergence of operator semigroups without time regularity.
\newblock {\em J. Evol. Equ.}, 21(4):5101--5134, 2021.

\bibitem{Dobrick2023}
Alexander Dobrick and Julian H{\"o}lz.
\newblock Uniform convergence of solutions to stochastic hybrid models of gene
  regulatory networks.
\newblock arXiv:2305.07435, 2023.

\bibitem{Engel2000}
Klaus-Jochen Engel and Rainer Nagel.
\newblock {\em One-parameter semigroups for linear evolution equations}, volume
  194 of {\em Graduate Texts in Mathematics}.
\newblock Springer-Verlag, New York, 2000.
\newblock With contributions by S. Brendle, M. Campiti, T. Hahn, G. Metafune,
  G. Nickel, D. Pallara, C. Perazzoli, A. Rhandi, S. Romanelli and R.
  Schnaubelt.

\bibitem{Engel2006}
Klaus-Jochen Engel and Rainer Nagel.
\newblock {\em A short course on operator semigroups}.
\newblock Universitext. Springer, New York, 2006.

\bibitem{Fendler1998}
Gero Fendler.
\newblock On dilations and transference for continuous one-parameter semigroups
  of positive contractions on {${L}^p$}-spaces.
\newblock {\em Ann. Univ. Sarav. Ser. Math.}, 9(1):iv+97, 1998.

\bibitem{GlueckDISS}
Jochen Gl{\"u}ck.
\newblock {\em Invariant Sets and Long Time Behaviour of Operator Semigroups}.
\newblock PhD thesis, Universit\"at Ulm, 2016.
\newblock DOI: 10.18725/OPARU-4238.

\bibitem{Glueck2016b}
Jochen Gl\"uck.
\newblock Spectral and asymptotic properties of contractive semigroups on
  non-{H}ilbert spaces.
\newblock {\em J. Operator Theory}, 76(1):3--31, 2016.

\bibitem{GlueckSG}
Jochen Gl{\"u}ck.
\newblock Spectral gaps for hyperbounded operators.
\newblock {\em Adv. Math.}, 362, 2020.

\bibitem{Haraux1988}
Alain {Haraux} and Amar {Youkana}.
\newblock {On a result of K. Masuda concerning reaction-diffusion equations}.
\newblock {\em {Tohoku Math. J. (2)}}, 40(1):159--163, 1988.

\bibitem{Kunze2019}
Markus Kunze, Luca Lorenzi, Abdallah Maichine, and Abdelaziz Rhandi.
\newblock {{\(L^p\)}}-theory for {Schr{\"o}dinger} systems.
\newblock {\em Math. Nachr.}, 292(8):1763--1776, 2019.

\bibitem{Kunze2020}
Markus Kunze, Abdallah Maichine, and Abdelaziz Rhandi.
\newblock Vector-valued {Schr{\"o}dinger} operators in {{\(L^p\)}}-spaces.
\newblock {\em Discrete Contin. Dyn. Syst., Ser. S}, 13(5):1529--1541, 2020.

\bibitem{Lyubich1970}
Yuri~I. Lyubich.
\newblock On the boundary spectrum of contractions in {M}inkowski spaces.
\newblock {\em Siberian Mathematical Journal}, 11(2):271--279, 1970.

\bibitem{Lyubich1997}
Yuri~I. Lyubich.
\newblock The boundary spectrum of linear operators in finite-dimensional
  spaces.
\newblock In {\em Linear operators ({W}arsaw, 1994)}, volume~38 of {\em Banach
  Center Publ.}, pages 217--226. Polish Acad. Sci., Warsaw, 1997.

\bibitem{Maichine2019}
Abdallah Maichine.
\newblock Generation of semigroup for symmetric matrix {Schr{\"o}dinger}
  operators in {{\(L^p\)}}-spaces.
\newblock {\em Semigroup Forum}, 99(2):245--259, 2019.

\bibitem{Nagel1989}
Rainer Nagel.
\newblock Operator matrices and reaction-diffusion systems.
\newblock {\em Rend. Semin. Mat. Fis. Milano}, 59:185--196, 1989.

\bibitem{Ouhabaz2005}
El~Maati Ouhabaz.
\newblock {\em Analysis of heat equations on domains}, volume~31 of {\em London
  Mathematical Society Monographs Series}.
\newblock Princeton University Press, Princeton, NJ, 2005.

\bibitem{Pierre2018}
Michel {Pierre}, Takashi {Suzuki}, and Haruki {Umakoshi}.
\newblock {Asymptotic behavior in chemical reaction-diffusion systems with
  boundary equilibria}.
\newblock {\em {J. Appl. Anal. Comput.}}, 8(3):836--858, 2018.

\bibitem{Pierre2017}
Michel {Pierre}, Takashi {Suzuki}, and Rong {Zou}.
\newblock {Asymptotic behavior of solutions to chemical reaction-diffusion
  systems}.
\newblock {\em {J. Math. Anal. Appl.}}, 450(1):152--168, 2017.

\end{thebibliography}

\end{document}